\renewcommand{\div}{\operatorname{div}}
\newcommand{\Rr}{{\mathbb{R}}}
\newcommand{\Tt}{{\mathbb{T}}}
\newcommand{\Hh}{{\overline{H}}}
\def\leq{\leqslant}
\def\geq{\geqslant}
\newtheoremstyle{thmlemcorr}{10pt}{10pt}{\itshape}{}{\bfseries}{.}{10pt}{{\thmname{#1}\thmnumber{
#2}\thmnote{ (#3)}}}
\newtheoremstyle{thmlemcorr*}{10pt}{10pt}{\itshape}{}{\bfseries}{.}\newline{{\thmname{#1}\thmnumber{
\newtheoremstyle{defi}{10pt}{10pt}{\itshape}{}{\bfseries}{.}{10pt}{{\thmname{#1}\thmnumber{
#2}\thmnote{ (#3)}}}
\newtheoremstyle{remexample}{10pt}{10pt}{}{}{\bfseries}{.}{10pt}{{\thmname{#1}\thmnumber{
#2}\thmnote{ (#3)}}}
\newtheoremstyle{ass}{10pt}{10pt}{}{}{\bfseries}{.}{10pt}{{\thmname{#1}\thmnumber{
A#2}\thmnote{ (#3)}}}
\theoremstyle{thmlemcorr}
\theoremstyle{thmlemcorr*}
\newtheorem{theorem*}{Theorem}
\newtheorem{lemma*}[theorem]{Lemma}
\newtheorem{corollary*}[theorem]{Corollary}
\newtheorem{proposition*}[theorem]{Proposition}
\newtheorem{problem*}[theorem]{Problem}
\newtheorem{conjecture*}[theorem]{Conjecture}
\theoremstyle{defi}
\newtheorem{hyp}{A}
\theoremstyle{remexample}
\newtheorem{teo}{Theorem}
\newtheorem{Lemma}{Lemma}
\newtheorem{Proposition}{Proposition}
\newtheorem{Corollary}{Corolary}
\newtheorem{clm}{Claim}
\theoremstyle{ass}
\begin{document}
\title[Stationary mean-field games]{Regularity for second order stationary mean-field games}

\author{Edgard A. Pimentel}
\address[E. A. Pimentel]{
	Universidade Federal do Cear\'a, Campus of Pici, Bloco 914, Fortaleza, Cear\'a 60.455-760, Brazil.}
\email{epimentel@mat.ufc.br}
\author{Vardan Voskanyan}
\address[V. Voskanyan]{
	King Abdullah University of Science and Technology (KAUST), CEMSE Division , Thuwal 23955-6900. Saudi Arabia, and  
KAUST SRI, Center for Uncertainty Quantification in Computational Science and Engineering.}
\email{vardan.voskanyan@kaust.edu.sa}

\keywords{Mean-field games; Integral Bernstein method; A priori estimates; Existence of classical solutions.}
\subjclass[2010]{
	35J47, 
	35A01, 
	35B45  
	} 

\thanks{E. Pimentel was supported by CNPq-Brazil, grant $\#$167556/2014-2.}
\date{\today}

\begin{abstract}
In this paper, we prove the existence of classical solutions for second order stationary mean-field game systems. These arise in ergodic (mean-field) optimal control, convex degenerate problems in calculus of variations, and in the study of long-time behavior of time-dependent mean-field games. Our argument is based on the interplay between the regularity of solutions of the Hamilton-Jacobi equation in terms of the solutions of the Fokker-Planck equation and vice-versa. Because we consider different classes of couplings, distinct techniques are used to obtain a priori estimates for the density. In the case of polynomial couplings, we recur to an iterative method. An integral method builds upon the properties of the logarithmic function in the setting of logarithmic nonlinearities. This work extends substantially previous results by allowing for more general classes of Hamiltonians and mean-field assumptions.
\end{abstract}

\maketitle
\tableofcontents

\section{Introduction}

In this paper, we prove the existence of $C^{\infty}$ solutions for the following stationary mean-field game problem:

\begin{equation}\label{mfg}
\begin{cases}
\Delta u(x)\,+\,H(x,Du(x))\,=\,g(m(x))\,+\,\overline{H},&\,x\in\Tt^d\\
\Delta m(x)\,-\div\left(D_pH(x,Du)m(x)\right)\,=\,0,&\,x\in\Tt^d.
\end{cases}
\end{equation} 
Here, the unknowns are the functions $u\colon \Tt^d\to \Rr$, $m\colon \Tt^d\to \Rr^+,$ and a constant $\overline H\in\Rr.$ We also assume that the function $m$ is a probability density, i.e. $m\geq 0$ and $\int_{\Tt^d}mdx=1$, and $u$ satisfies the normalizing condition $\int_{\Tt^d}udx=0.$

The system \eqref{mfg} was introduced and first investigated by J.-M. Lasry and P.-L. Lions in \cite{ll1,ll3,LCDF}. In \cite{ll1,ll3}, the authors established the existence of weak solutions for \eqref{mfg}. 

The model problem \eqref{mfg} arises in ergodic (mean-field) optimal control problems, see \cite{bensoussan}. For developments related to ergodic mean-field games, see \cite{bardierg} and \cite{feleqi}. An additional motivation for \eqref{mfg} is the long-time behavior of time-dependent mean-field games. For results in this direction, we refer the reader to \cite{CLLP} and \cite{cllp13}. Finally, we mention the connection between \eqref{mfg} and the theory of convex degenerate problems in calculus of variations. In this sense, our results most likely add to the regularity theory of those problems as well.

In \cite{GPatVrt}, the existence of smooth solutions for stationary mean-field games is established under quadratic type assumptions on $H$ and polynomial or logarithmic assumptions on $g$. The existence of classical solutions for related systems is addressed in \cite{GM} and \cite{LIONS}, whereas several a priori estimates are obtained in \cite{GPM1}. Stationary mean-field game problems with congestion effects have been considered in \cite{GMit}, in the setting of purely quadratic Hamiltonians. 

Recent developments on stationary mean-field game problems were reported in \cite{cirant}, where the integral Bernstein estimates were used in the context of mean-field games for the first time (see also \cite{Lions85}). In that paper, the author established the existence of weak solutions for \eqref{mfg} in bounded domains, in the presence of Neumann boundary conditions. The arguments in \cite{cirant} require minimal assumptions on the Hamiltonian and considers couplings bounded from below, growing at most polynomially. In \cite{bardifeleqi}, the authors consider stationary mean-field game systems in the presence of uniformly elliptic operators and nonlocal couplings. The existence of solutions $(u,m,\bar{H})\in\mathcal{C}^{2,\alpha}(\Tt^d)\times W^{1,p}(\Tt^d)\times\Rr$ is established under a set of assumptions on the Hamiltonian. 

Time-dependent mean-field games of second order have also been considered in the literature. For the seminal analysis of this problem, we refer the reader to \cite{ll2} and \cite{ll3}. Well-posedness in the class of weak solutions have been recently investigated in \cite{porweak} and \cite{cgbt}. See also \cite{porretta}. Classical solutions have been considered in \cite{GPM2}, \cite{GPM3} and \cite{GPim2}.

In the present article we prove the existence of classical solutions, i.e., of class $C^\infty$, for \eqref{mfg} under certain conditions on the Hamiltonian $H$ and the nonlinearity $g$. Our main Theorem reads as follows:

\begin{teo}\label{maintheorem}
Let the Assumptions A\ref{Hsub}-A\ref{gpos} (cf. Section \ref{assp}) hold. Assume that either Assumptions A\ref{ggrow}.a and A\ref{alpha} or A\ref{ggrow}.b and A\ref{gamma} (cf. Section \ref{assp}) are satisfied. Then, there exists a unique $C^{\infty}$ solution $(u,m,\overline{H})$ of \eqref{mfg}.
\end{teo}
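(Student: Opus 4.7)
The plan is to obtain existence of $C^\infty$ solutions by combining a priori estimates, a continuation argument, and bootstrap regularity, and to get uniqueness via the Lasry-Lions monotonicity method.

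First I would embed \eqref{mfg} into a one-parameter family of regularised problems $(\mathrm{MFG})_\lambda$, $\lambda\in[0,1]$, interpolating between a trivial problem at $\lambda=0$ (for which $(u,m,\overline H)=(0,1,H(\cdot,0)\!-\!g(1))$-type data is explicit, giving a known smooth solution) and \eqref{mfg} at $\lambda=1$. The continuation method then reduces existence to showing that the set $\Lambda\subset[0,1]$ of values $\lambda$ admitting a classical solution is simultaneously non-empty, open, and closed. Openness follows from an implicit function theorem applied to the linearised operator, whose invertibility on the space $\{\int u=0\}\times\{\int m=1\}\times\Rr$ rests on the standard Lasry-Lions ellipticity of the linearisation (monotonicity of $g$ plus convexity of $H$ in $p$). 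Closedness is where the heart of the argument lies: it requires uniform-in-$\lambda$ a priori estimates strong enough to pass to the limit in the classical topology.

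The a priori estimates I would set up in the following cascade. (i) An integral Bernstein estimate applied to the Hamilton-Jacobi equation, exploiting assumption A\ref{Hsub} on the growth of $H$ and its derivatives, yielding a bound on $\|Du\|_{L^q}$ in terms of integral norms of $m$ through $g(m)$. (ii) An estimate for the adjoint Fokker-Planck equation: for polynomial couplings (case A\ref{ggrow}.a), I would test the second equation against a power $m^k$ and iterate the resulting Moser-type scheme under A\ref{alpha}, obtaining $\|m\|_{L^p}$ for every $p<\infty$; for logarithmic couplings (case A\ref{ggrow}.b), I would instead test the Fokker-Planck equation against $\log m$, using that $\int g(m)\log m\,dx$ dominates a norm of $m\log m$ and coupling this with the Bernstein bound and A\ref{gamma} to control $m\log m$ in $L^1$, then $m$ in $L^p$. (iii) Closing the loop: substituting these bounds back into the Bernstein estimate gives $Du\in L^\infty$, and then Calderón-Zygmund plus Schauder estimates alternately applied to the two linear equations (the first with right-hand side $g(m)-H(x,Du)\in L^p$, the second in divergence form with bounded coefficients $D_pH(x,Du)$) upgrade $(u,m)$ to $C^{2,\alpha}$ uniformly in $\lambda$.

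With the uniform $C^{2,\alpha}$ bound in hand, the standard bootstrap for smooth nonlinear elliptic equations promotes solutions to $C^\infty$: the Fokker-Planck coefficient $D_pH(\cdot,Du)$ gains a derivative, so $m$ does too via elliptic regularity, and this feeds back into the Hamilton-Jacobi equation, etc. For uniqueness I would follow Lasry-Lions: given two solutions $(u_1,m_1,\overline H_1)$ and $(u_2,m_2,\overline H_2)$, subtract the Hamilton-Jacobi equations, multiply by $m_1-m_2$, subtract the Fokker-Planck equations multiplied by $u_1-u_2$, and integrate; convexity of $H$ in $p$ and strict monotonicity of $g$ (A\ref{gpos}) force $m_1=m_2$ and then $u_1=u_2$ and $\overline H_1=\overline H_2$.

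The main obstacle I expect is step (ii) of the a priori estimates — establishing a quantitative integrability bound on $m$ that is uniform in $\lambda$ — because this is precisely where the polynomial and logarithmic regimes must be treated by genuinely different tools (Moser iteration versus testing against $\log m$) and where the compatibility between the growth rate of $H$ (A\ref{Hsub}) and the growth of $g$ (A\ref{alpha} or A\ref{gamma}) is critical; any mismatch breaks the closing of the loop between the Bernstein estimate and the Fokker-Planck estimate.
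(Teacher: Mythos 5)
Your overall architecture — embed \eqref{mfg} into a family $H_\lambda$, show the solution set $\Lambda$ is nonempty, open (implicit function theorem + invertibility of the linearisation from monotonicity of $g$ and convexity of $H$) and closed (uniform a priori estimates), then bootstrap to $C^\infty$ — is exactly the paper's plan (Appendix~\ref{exist}), and the cascade ``Bernstein estimate for the HJ equation $\leftrightarrow$ integrability of the Fokker--Planck density'' is likewise the paper's core idea (Propositions \ref{cor1}, \ref{munif}, \ref{logprop}, Theorems \ref{teo1} and \ref{teo2}). Your polynomial case matches Proposition \ref{munif}'s Moser iteration essentially word for word. Your explicit Lasry--Lions uniqueness argument is a reasonable supplement; the paper takes it as standard and focuses on existence, with local uniqueness only via the implicit function theorem along the continuation path.

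There is, however, a genuine gap in your treatment of the logarithmic case. You propose to test the Fokker--Planck equation against $\log m$, bound $m\log m$ in $L^1$ and thence $m$ in $L^p$, and feed this back into the Bernstein estimate. But the Bernstein bound (Proposition \ref{cor1}) controls $\|Du\|_{L^p}$ in terms of $\|g(m)\|_{L^{r_p}}$ with $r_p\to d$, and for $g(m)=\ln m$ this requires $L^r$ bounds on $\ln m$ itself, not on $m$. These are not equivalent: $\|m\|_{L^p}$ stays bounded while $\ln m$ blows up at points where $m$ is small, and indeed the paper emphasises this in the introduction as one of the two main difficulties of the logarithmic case. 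The paper's actual argument (Lemma \ref{loglemma1}, Corollary \ref{loglemma2}, Proposition \ref{logprop}) tests the Fokker--Planck equation against the primitive $F(m)=\int_1^m |\ln y|^k y^{-2}\,\mathrm{d}y$, which gives control on $\int |\ln m|^{l-1}|D\ln m|^2$, and then runs a Sobolev--H\"older iteration in the exponent of $|\ln m|$ to bound $\|\ln m\|_{L^{q(l+1)}}$ by $\|Du\|_{L^{2p(\gamma-1)}}^{\gamma-1}$. That specific choice of test function and the iteration on powers of $|\ln m|$ are the missing ingredient; testing against $\log m$ alone only yields $\|D\ln m\|_{L^2}$ (Lemma \ref{loglemma01}), which with Poincar\'e gives $\ln m\in L^2$, not enough to reach $L^d$ as the Bernstein estimate requires. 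Without that iteration, the loop between the two equations does not close under A\ref{gamma}, so the closedness of $\Lambda$ fails in the logarithmic regime.
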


Assumptions A\ref{Hsub}-A\ref{a2} refer to conditions imposed on the growth of Hamiltonian $H$. These are now standard type of assumptions in the literature of mean-field games. A model Hamiltonian satisfying those is the following: 
\[
H(x,p)\,=\,a(x)\left(1+|p|^2\right)^\frac{\gamma}{2}\,+\,V(x),
\]
where $a,\,V\in C^{\infty}(\Tt^d)$ with $a> 0$. Because of a duality argument, the exponent $\gamma$ determines the growth of the Lagrangian associated with the underlying ergodic optimal control problem. 

Assumptions A\ref{gpos}-A\ref{alpha} concern the nonlinearity $g$. In this paper, we consider both polynomial couplings ($g(m)=m^\alpha$, A\ref{ggrow}.a) as well as logarithmic nonlinearities ($g(m)=\ln(m)$, A\ref{ggrow}.b). These are general examples that present distinct difficulties. When considering the former coupling, the central issue regards the growth of $g$. In the latter case, two mathematical challenges must be addressed. First, unlike in the polynomial setting, bounds for $L^p$ norms of the coupling $g$ do not follow from estimates for $m$ in Lebesgue spaces. In addition, because $\ln(m)$ in not bounded from below, bounds for solutions of the Hamilton-Jacobi equation cannot be inferred from the optimal control formulation of the problem.

Our primary contribution in Theorem \ref{maintheorem} is a substantial improvement of the results in the literature by considering general Hamiltonians as well as less restrictive growth regimes for the coupling $g$. For polynomial couplings, our result improves those in \cite{GPatVrt} by allowing for a more general growth regime of $g$, in higher dimensions. It also applies to a larger class of polynomial couplings than in \cite{cirant}. Furthermore, Theorem \ref{maintheorem} accommodates logarithmic couplings, which are nonlinearities unbounded from below.
Moreover, when compared with the time-dependent case, our results yield more general conditions, as one should expect, cf. \cite{GPM3}, \cite{GPim2}.

In broad lines, the proof of Theorem \ref{maintheorem} relies on two critical steps. The first one explores the interplay between two types of a priori estimates: suitable norms of $g$ controlled in terms of norms of $Du$ and vice-versa. In the case of the Hamilton-Jacobi (HJ) equation, this is done by recurring to the integral Bernstein method. This yields Lipschitz regularity for the solutions of the (HJ) in terms of norms of $g$ in appropriate Lebesgue spaces. To investigate the integrability of the nonlinearity, we establish a priori bounds for suitable norms of $m$ and $\ln(m)$. The former case is addressed by means of an iterative argument. To tackle the latter case, we use an integral argument combined with Sobolev's Theorem and concavity properties of the logarithmic function. 

Once those bounds are obtained, we combine them to get a certain amount of a priori regularity. This allows us to apply the continuation method, as in \cite{GPatVrt} and \cite{GMit}, and conclude the proof. We observe that, although the problem \eqref{mfg} has a variational structure, our techniques are analytic in nature; hence, we believe they still apply when small perturbations of this system are considered.

The remainder of this paper is structured as follows: in Section \ref{maop} we put forward the main assumptions under which we work in the paper, along with an outline of the proof of Theorem \ref{maintheorem}. In the Section \ref{rfpp} we investigate the integrability of the Fokker-Planck equation. The log-integrability of solutions to the Fokker-Planck equation is the content of the Section \ref{intlog}. Section \ref{bernstein} presents a priori estimates for the Hamilton-Jacobi equation whereas the Section \ref{proofmt} details the proofs of Theorem \ref{teo1} and Theorem \ref{teo2}. The application of the continuation method is described in the Appendix \ref{exist}.
\vspace{.15in}

{\bfseries Acknowledgments:} The authors are grateful to M. Cirant and D. Gomes for useful comments and suggestions during the preparation of this paper.

\section{Main assumptions and outline of the paper}\label{maop}

\subsection{Main assumptions}\label{assp}

In what follows, we detail the assumptions under which we work throughout the paper. We start by presenting the assumptions regarding the Hamiltonian $H$.

\begin{hyp}\label{Hsub}
The Hamiltonian $H:\Tt^d\times\Rr^d\to\Rr$ is smooth in both arguments. Furthermore, 
\begin{enumerate}
\item For every $x\in\Tt^d$, $H(x,p)$ is strictly convex with respect to $p$, i.e., there exists a constant $\delta>0$ so that $$D^2_{pp}H(x,p)\,>0.$$
\item There exists a constant $C>0$ such that the Hamiltonian $H$ satisfies a growth condition of the form $$C|p|^{\gamma}-C\,\,\leq\,H(x,p)\,\leq\,C\,+\,C\left|p\right|^\gamma,$$for $1<\gamma$.
\end{enumerate}
\end{hyp}

\begin{hyp}\label{DpH}
There exists a constant $C>0$ such that  $$\left|D_pH(x,p)\right|\leq C+C\left|p\right|^{\gamma-1}.$$
\end{hyp}

\begin{hyp}\label{Lsup}
There exists $C>0$ such that
$$p\cdot D_pH(x,p)-H(x,p)\geq -C+C\left|p\right|^\gamma.$$
\end{hyp}

\begin{hyp}\label{a2}
There exists a constant $C>0$ so that $$\left|D_{x}H(x,p)\right|,\,\left|D_{xx}H(x,p)\right|\,\leq\,CH\,+\,C,$$ $$D^2_{xp}H\leq C|p|^{\gamma-1}+C,$$ and $$\left|D_{pp}H(x,p)\right|\,\leq\,C\left|p\right|^{\gamma-2}+C.$$
\end{hyp}

A typical Hamiltonian satisfying the former set of assumptions is given by $$H(x,p)\,=\,a(x)\left(1+\left|p\right|^2\right)^\frac{\gamma}{2}+V(x),$$where $a,\,V\in\mathcal{C}^\infty(\Tt^d)$ with $a(x)>0$.

The next set of assumptions concerns the nonlinearity $g$.

\begin{hyp}\label{gpos}
The nonlinearity $g:\Rr^+\to\Rr$ is smooth and increasing, with $g(1)\geq 0$.
\end{hyp}

\begin{hyp}[Growth conditions on $g$]\label{ggrow}

We consider two types of nonlinearities:

\begin{itemize}
\item[a.]
Power-like nonlinearity: there exists $C>0$ such that
\begin{equation*}
g[m](x)\leq
\begin{cases}
C m(x),&\;  m\leq 1\\
C m^\alpha(x),&\; m> 1,
\end{cases}
\end{equation*}for some $\alpha>0$.
\item[b.] 
Logarithmic nonlinearity: $g[m](x)=\ln m(x)$.
\end{itemize}
\end{hyp}

\begin{hyp}\label{alpha}
The exponent $\alpha$ in A\ref{ggrow} is such that

$$\alpha\,\leq\,\frac{\gamma}{d(\gamma-1)}.$$
\end{hyp} 

\begin{hyp}\label{gamma}
The exponent $\gamma$ is so that $1\,<\,\gamma\,<\,2+\frac{1}{d-1}$.
\end{hyp}

\subsection{Outline of the proof}

In Section \ref{rfp}, we investigate the integrability of solutions to the Fokker-Planck equation. At first, we establish upper bounds for norms of $m$ in terms of norms of $D_pH(x,Du)$, as in the following Proposition:

\begin{Proposition}\label{munif}
Let $(u,m,\bar{H})$ be a classical solution of \eqref{mfg} and assume that $$\frac{1}{p}+\frac{1}{p'}=1.$$ Under the Assumptions A\ref{Hsub}-A\ref{DpH}, there exists a constant $C_p>0$, for any $p\in (1,\max\{1+\alpha,2^*/2\})$, such that:
\[
\|m\|_{L^r(\Tt^d)}\leq C_p \left(1+\left\||D_pH|^2\right\|_{L^{p'}(\Tt^d)}\right)^{\frac{1-\frac {p} {r}}{1-\frac {2p} {2^*}} },
\]
for every $r>p$. As a consequence, we have:
\[
\|m\|_{L^{\infty}(\Tt^d)}\leq C_p \left(1+\left\||D_pH|^2\right\|_{L^{p'}(\Tt^d)}\right)^{\frac{1}{1-\frac {2p} {2^*}} }
.
\]
\end{Proposition}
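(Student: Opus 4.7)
The plan is to derive a Caccioppoli-type estimate for the Fokker-Planck equation by testing against a power of $m$, combine it with the Sobolev embedding and a Hölder decomposition of the integral $\int m^r|D_pH|^2\,\dx$, and close the estimate through interpolation and a Moser-type iteration. To begin, I would multiply the Fokker-Planck equation by $m^{r-1}$ (for $r>1$) and integrate by parts on $\Tt^d$; no boundary term appears. After a Young inequality on the right-hand side the resulting identity reads $\int m^{r-2}|Dm|^2\,\dx \leq \int m^r|D_pH|^2\,\dx$. Setting $v:=m^{r/2}$, so that $|Dv|^2=(r/2)^2 m^{r-2}|Dm|^2$, the Sobolev embedding $H^1(\Tt^d)\hookrightarrow L^{2^*}(\Tt^d)$ then yields
\[
\|m\|_{L^{r\cdot 2^*/2}(\Tt^d)}^r \,\leq\, C\,\|m\|_{L^r(\Tt^d)}^r\,+\,C\,r^2\int_{\Tt^d} m^r|D_pH|^2\,\dx.
\]

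Next, Hölder with exponents $p$ and $p'$ bounds the remaining integral by $\|m\|_{L^{rp}}^r\left\||D_pH|^2\right\|_{L^{p'}}$. Because $p<2^*/2$, the norm $\|m\|_{L^{rp}}$ can be interpolated between $\|m\|_{L^r}$ and $\|m\|_{L^{r\cdot 2^*/2}}$ with exponent $\theta=(1/p-2/2^*)/(1-2/2^*)\in(0,1)$, after which Young's inequality absorbs the high-norm factor into the left-hand side, producing a recursion of the form
\[
\|m\|_{L^{r\cdot 2^*/2}(\Tt^d)} \,\leq\, C_r\,\|m\|_{L^r(\Tt^d)}\,\left(1+\left\||D_pH|^2\right\|_{L^{p'}(\Tt^d)}\right)^{1/(r\theta)}.
\]

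Iterating this recursion from $r_0=p$ with $r_{k+1}=r_k\cdot 2^*/2$, the exponents telescope: writing $q=2^*/2$ one has $\sum_{k\geq 0}\tfrac{1}{r_k\theta}=\tfrac{1}{p\theta(1-1/q)}=\tfrac{1}{1-2p/2^*}$, while the overall multiplicative constant stays bounded. Starting the iteration requires an a priori bound $\|m\|_{L^p(\Tt^d)}\leq C_p$, supplied either by the $L^{1+\alpha}$ estimate coming from the variational structure of \eqref{mfg} (in the polynomial regime) or by one preliminary Caccioppoli step based on $\|m\|_{L^1}=1$. Passing to the limit yields the announced $L^\infty$ bound, and for arbitrary $r>p$ the $L^r$ estimate follows by interpolating $\|m\|_{L^r}\leq\|m\|_{L^p}^{p/r}\|m\|_{L^\infty}^{1-p/r}$, which collapses precisely to the exponent $(1-p/r)/(1-2p/2^*)$ of the statement. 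The chief technical nuisance is coordinating the test-function power $m^{r-1}$, the Sobolev exponent $2^*/2$, and the Hölder split $(p,p')$ so that the iteration constants telescope to exactly $1/(1-2p/2^*)$; securing the starting $L^p$ bound in the full stated range is the other delicate ingredient.
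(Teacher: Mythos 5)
Your proof is essentially correct and runs the same Moser iteration as the paper, with two small variations in the packaging. After testing the Fokker--Planck equation against a power of $m$, applying Young, and using Sobolev on $m^{r/2}$, you reach the same starting inequality the paper does (yours is the paper's estimate with $r$ shifted by one, an inessential reindexing). From there the paths diverge slightly. The paper folds the H\"older exponent $p$ directly into the iteration scale: after writing the right-hand side as $\left\||D_pH|^2\right\|_{L^{p'}}\bigl(\int m^{p(r+1)}\bigr)^{1/p}$, it sets $r+1=\beta^n$ with $\beta=2^*/(2p)$ so that the H\"older factor coincides with the previous iterate, and the iteration jumps directly from $L^{p\beta^n}$ to $L^{p\beta^{n+1}}$. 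You instead interpolate $\|m\|_{L^{rp}}$ between $\|m\|_{L^r}$ and $\|m\|_{L^{r\cdot 2^*/2}}$ and absorb the high-norm factor by Young, then run a plain Moser iteration at scale $2^*/2$. Your telescoping sum
\[
\sum_{k\geq 0}\frac{1}{r_k\theta}\,=\,\frac{1}{p\theta\bigl(1-2/2^*\bigr)}\,=\,\frac{1}{1-2p/2^*}
\]
is correct and reproduces the paper's exponent, and the constants $\prod_k C_{r_k}^{1/r_k}$ indeed stay bounded because $r_k$ grows geometrically. The remaining difference is the direction of the final interpolation: you pass to $L^\infty$ first and then interpolate down to $L^r$ against $\|m\|_{L^p}\leq C_p$, which gives exactly the claimed exponent $(1-p/r)/(1-2p/2^*)$; the paper instead reads off the $L^{p\beta^n}$ estimates at grid points and interpolates between consecutive grid points, invoking concavity of $s\mapsto 1-1/s$. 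Both are fine, and yours is arguably cleaner.

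One remark on the starting bound $\|m\|_{L^p}\leq C_p$: your primary route, via the $L^{1+\alpha}$ bound from the energy identity (Corollary~\ref{HDu}, which gives $\int_{\Tt^d}g(m)m\,\dx\leq C$), is exactly what the paper uses and is what justifies restricting to $p<1+\alpha$. Your alternative of a ``preliminary Caccioppoli step based on $\|m\|_{L^1}=1$'' is not clearly self-contained: that step would again produce a factor $\left\||D_pH|^2\right\|_{L^{p'}}\|m\|_{L^p}$ on the right and so does not by itself deliver an unconditional $L^p$ bound for $p>1$. You should lean on the energy estimate alone, as the paper does.
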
Proposition \ref{munif} plays an instrumental role in addressing mean-field game systems in the presence of power-like nonlinearities. Further, we address the integrability of $\ln(m)$, where $m$ solves the second equation in \eqref{mfg}, in the classical sense. It is the content of the following:

\begin{Proposition}\label{logprop}
Assume that $(u,m,\bar{H})$ is a classical solution of \eqref{mfg} and let $l>1$. Suppose in addition that $p,\,q\,>1$ satisfy \eqref{conjugate} and \eqref{q2star}. Assume further that A\ref{DpH} and A\ref{ggrow}.b hold. Then, there exists $C_l>0$ so that
$$\left\|g\right\|_{L^{q(l+1)}(\Tt^d)}\,\leq\,C\,+\,C_l\left\|Du\right\|_{L^{2p(\gamma-1)}(\Tt^d)}^{\gamma-1}.$$
\end{Proposition}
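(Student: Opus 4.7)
The plan is to perform an integral (entropy-type) argument on the Fokker-Planck equation after rewriting it in terms of $v:=\ln m$. Dividing the second equation of \eqref{mfg} by $m$ (strictly positive for classical solutions) and using the identities $Dm=m\,Dv$ and $\Delta m=m(\Delta v+|Dv|^2)$ yields the equivalent PDE
\begin{equation*}
\Delta v + |Dv|^2 \;=\; \div\!\bigl(D_p H(x,Du)\bigr)+ D_p H(x,Du)\cdot Dv.
\end{equation*}
Writing $v=v_+-v_-$ with $v_\pm\ge 0$, I then handle the two parts differently. For the positive part, the concavity of $\ln$ gives $\ln m\le m-1\le m$ on $\{m\ge 1\}$, so $v_+\le m$ pointwise; Proposition~\ref{munif} already controls $\|v_+\|_{L^r}$ for any relevant $r$ in terms of $\||D_pH|^2\|_{L^{p'}}$.

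The substantive step concerns $v_-$. Multiplying the equation for $v$ by $-v_-^{\,s}$, integrating on $\Tt^d$, and using $Dv_-=-\mathds{1}_{\{v<0\}}Dv$, the four resulting integrals acquire compatible signs; after absorbing the cross terms via Cauchy-Schwarz and Young's inequality I arrive at
\begin{equation*}
s\!\int v_-^{\,s-1}|Dv_-|^2\,dx + \int v_-^{\,s}|Dv_-|^2\,dx \;\le\; s\!\int v_-^{\,s-1}|D_pH|^2\,dx + \int v_-^{\,s}|D_pH|^2\,dx.
\end{equation*}
The identity $|D(v_-^{(s+1)/2})|^2=\tfrac{(s+1)^2}{4}\,v_-^{\,s-1}|Dv_-|^2$ converts this into an $H^1$-bound for $v_-^{(s+1)/2}$; Sobolev's embedding $W^{1,2}(\Tt^d)\hookrightarrow L^{2^*}(\Tt^d)$ then upgrades it to an $L^{(s+1)2^*/2}$-bound for $v_-$, while Hölder's inequality with conjugate exponents $p,q$ prescribed by \eqref{conjugate} and \eqref{q2star} produces the factor $\|D_pH\|_{L^{2p}}$ on the right-hand side. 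Choosing $s$ so that $(s+1)2^*/2=q(l+1)$ — iterating from $s=1$ if necessary to reach this exponent — closes the estimate; Jensen's inequality $\int\ln m\le\ln\!\int m=0$ controls the mean $\bar v\le 0$ needed to pass from the Sobolev seminorm to the full norm on the torus.

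Combining the bounds on $v_+$ and $v_-$ yields $\|\ln m\|_{L^{q(l+1)}}\le C+C_l\|D_pH\|_{L^{2p}}$, and Assumption~A\ref{DpH} rewrites $\|D_pH\|_{L^{2p}}$ as $C+C\|Du\|_{L^{2p(\gamma-1)}}^{\gamma-1}$, producing the claimed inequality. The main obstacle is the bookkeeping of exponents in the Sobolev-Hölder step: the target $q(l+1)$ must match the Sobolev gain $2^*/2$ and the Hölder split between $v_-$ and $D_pH$, which is precisely what the relations \eqref{conjugate} and \eqref{q2star} encode — a Moser-type iteration, morally analogous to the one in \cite{cirant} for bounded couplings, but routed through the change of variables $v=\ln m$ so that the unboundedness of $\ln(\cdot)$ from below is absorbed by the quadratic term $|Dv|^2$ in the equation for $v$.
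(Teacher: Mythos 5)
Your change of variables $v=\ln m$, the resulting PDE $\Delta v+|Dv|^2=\div(D_pH)+D_pH\cdot Dv$, and the estimate for $v_-$ obtained by testing this equation against $-v_-^{\,s}$ are all correct: the quadratic term $|Dv|^2$ picks up the favourable sign after Cauchy--Schwarz, and the energy inequality you write does follow. This part coincides with the paper's Lemma~\ref{loglemma1} restricted to the region $\{m<1\}$.

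The gap is in your treatment of $v_+$. You bound $v_+\leq m$ pointwise and then invoke Proposition~\ref{munif} to control $\|v_+\|_{L^r}$, but this step fails here for two separate reasons. First, the iteration in the proof of Proposition~\ref{munif} must be primed by a bound $\|m\|_{L^p(\Tt^d)}\leq C$ for some $p>1$; that starting point is supplied by Corollary~\ref{HDu} in the \emph{polynomial} case (where $\int m^{1+\alpha}\leq C$), whereas under Assumption~A\ref{ggrow}.b one only has $\int m=1$ and $\int m\ln m\leq C$, which does not place $m$ in any $L^p$ with $p>1$ a priori. Second, even if Proposition~\ref{munif} applied, it yields $\|m\|_{L^r}\leq C_p\bigl(1+\||D_pH|^2\|_{L^{p'}}\bigr)^{\theta}$ with $\theta=\frac{1-p/r}{1-2p/2^*}$, and for the large exponent $r=q(l+1)$ relevant here $\theta$ exceeds $1$; this produces a strictly superlinear dependence on $\|D_pH\|$, incompatible with the linear right-hand side claimed in the Proposition.

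This asymmetry between $v_+$ and $v_-$ is exactly why the paper does not perform your change of variables: testing the $v$-equation with $+v_+^{\,s}$ gives the wrong sign on $\int v_+^{\,s}|Dv_+|^2$. Instead, Lemma~\ref{loglemma1} tests the Fokker--Planck equation directly against $F(m)=\int_1^m|\ln y|^k/y^2\,dy$; since $F'(m)=|\ln m|^k/m^2$, the factor $1/m^2$ cancels the weight $m$ carried by the drift term, and one obtains $\int|\ln m|^k|D\ln m|^2\leq\int|\ln m|^k|D_pH|^2$ for \emph{both} signs of $\ln m$ in a single stroke. In your framework this amounts to choosing the test function $\phi$ with $\phi'-\phi=|v|^k$ rather than $\pm v_\pm^{\,s}$; concretely, you can repair your split by testing the Fokker--Planck equation against $F_+(m)=\int_1^m(\ln y)_+^k/y^2\,dy$, which yields $\int v_+^{\,k}|Dv_+|^2\leq\int v_+^{\,k}|D_pH|^2$ and then closes just as your $v_-$ estimate does.

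One further point: Jensen gives $\int\ln m\leq 0$ but no lower bound, and the sign of the mean alone is not enough to convert the Sobolev seminorm estimate into a full $L^r$ bound on the torus. The paper also needs $\int\ln m\geq -C$, obtained by integrating the Hamilton--Jacobi equation and using Corollary~\ref{HDu} (see Corollary~\ref{mlnm}); you should invoke this explicitly.
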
The proofs of Proposition \ref{munif} and \ref{logprop} are detailed in Sections \ref{rfpp} and \ref{intlog}, respectively. Section \ref{bernstein} presents a Sobolev type of estimate for solutions to the Hamilton-Jacobi equation in terms of the nonlinearity $g$; this is given as in the next Proposition:

\begin{Proposition}\label{cor1}
Suppose that $(u,m,\bar{H})$ is a solution of \eqref{mfg} and assume that A\ref{Hsub}-A\ref{gpos} hold. Then, for $p>1$ sufficiently large, we have $$\left\|Du\right\|_{L^p(\Tt^d)}\,\leq\,C_p\,+\,C_p\left\|g\right\|_{L^{r_p}(\Tt^d)},$$
where $r_p\to d$ as $p\to +\infty.$
\end{Proposition}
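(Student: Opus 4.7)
The plan is to deploy the integral Bernstein method on the Hamilton--Jacobi equation in \eqref{mfg}, following the template of \cite{Lions85,cirant}. Setting $w := |Du|^2$, I would start by differentiating $\Delta u + H(x,Du) = g(m) + \overline{H}$ in $x_k$, multiplying by $u_{x_k}$, and summing over $k$, which produces the Bochner-type identity
\[
\tfrac{1}{2}\Delta w = |D^2 u|^2 - \tfrac{1}{2} D_p H(x,Du)\cdot Dw - Du\cdot D_x H(x,Du) + Du\cdot Dg(m).
\]
The first two right-hand side terms are coercive once tested against a positive weight; the remaining two are forcing terms to be controlled in terms of the structural quantities $H$, $g$, and $Du$.

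Next I would test this identity against $w^{\beta}$ for a large parameter $\beta$ (to be optimized later) and integrate by parts on $\Tt^d$, obtaining, schematically,
\[
\int |D^2u|^2 w^\beta + \tfrac{\beta}{2}\int |Dw|^2 w^{\beta-1} = -\tfrac{1}{2(\beta+1)}\int \div(D_p H)\, w^{\beta+1} + \int Du\cdot D_x H\, w^\beta - \int Du\cdot Dg\, w^\beta.
\]
A further integration by parts in the $g$-term, together with the substitution $\Delta u = g + \overline{H} - H$ from the HJ equation itself, reshapes the forcing into the sum of $\int g^2 w^\beta$, $\int g H w^\beta$, and a mixed gradient cross-term $\beta \int g\, Du\cdot Dw\, w^{\beta-1}$. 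Using A\ref{Hsub}--A\ref{a2} I would estimate $|D_x H|,|H|\lesssim |Du|^{\gamma}+1$ and $|\div(D_p H)|\lesssim (1+|Du|^{\gamma-2})|D^2 u|+|Du|^{\gamma-1}+1$. The central coercive gain arises from pairing A\ref{Hsub}.(2) with the pointwise inequality $|D^2u|^2\ge \tfrac{1}{d}(\Delta u)^2=\tfrac{1}{d}(g+\overline{H}-H)^2$, which generates a positive contribution of order $|Du|^{2\gamma}w^\beta$ on the left.

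With these pieces in hand, each cross-term is absorbed by Young's inequality: the $|D^2u|$ and $|Dw|$ factors are pushed into the good quadratic terms, while the lower-order $|Du|$ powers are absorbed into the freshly generated $|Du|^{2\gamma}w^\beta$. This leaves an inequality of the form
\[
\int \bigl|D\bigl(w^{(\beta+1)/2}\bigr)\bigr|^2\,dx + \int |Du|^{2(\beta+\gamma)}\,dx \leq C_\beta + C_\beta \int |g|^{a}|Du|^{b}\,dx,
\]
for explicit exponents $a=a(\beta,\gamma,d)$ and $b=b(\beta,\gamma,d)$. The Sobolev embedding $H^1(\Tt^d)\hookrightarrow L^{2^*}(\Tt^d)$ applied to the first summand produces $\bigl\||Du|^{\beta+1}\bigr\|_{L^{2^*}(\Tt^d)}^2$ on the left; a final application of H\"older's inequality on the right allows the $|Du|^b$ factor to be reabsorbed into the left-hand side, leaving a bound of the advertised shape with $p := 2^*(\beta+1)\to\infty$ as $\beta\to\infty$. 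A bookkeeping of the exponents reveals that the companion exponent $r_p$ for $g$ tends to $d$ in this limit, in full agreement with the Calder\'on--Zygmund scaling for $\Delta u = g+\overline{H}-H$.

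The main technical obstacle lies precisely in this bookkeeping. I must choose the Young and H\"older exponents so that (i) every cross-term is absorbable on the left, (ii) the $|Du|^\gamma$ growth of $H$ is controlled when it couples with $g$ inside $\int g H w^\beta$, and (iii) the resulting exponent of $g$ converges to $d$ rather than to some larger value. These three constraints are mutually compatible only when $p$ is taken sufficiently large, which is exactly the restriction appearing in the statement.
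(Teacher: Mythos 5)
Your proposal is the integral Bernstein method applied exactly as in the paper: the same Bochner-type identity for $v=|Du|^2$, the same test function $v^\beta$, the same use of A\ref{Hsub}--A\ref{a2} to bound the forcing terms, and the same crucial coercivity gain from $|D^2u|^2\geq\tfrac{1}{d}(\Delta u)^2=\tfrac{1}{d}(g+\overline{H}-H)^2$ followed by Sobolev and H\"older, yielding $r_p\to d$. This matches the route the paper takes through Lemmas \ref{lemma1}--\ref{lemma5}, so the argument is correct and essentially identical to the published one.
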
To prove the previous Proposition we use the integral Bernstein method, see \cite{Lions85}. We follow the ideas in \cite{cirant}, where these techniques were firstly used in the context of mean-field game theory.

Our argument proceeds by combining Propositions \ref{cor1} with \ref{munif} and \ref{logprop} to explore the interplay between the regularity of the Hamilton-Jacobi equation in terms of the Fokker-Planck equation and vice-versa. The proper manipulation of this interplay yields uniform estimates. These enable us to argue through the continuation method (see, for example, \cite{GPatVrt} and \cite{GMit}) and conclude the proof of Theorem \ref{maintheorem}. Those uniform estimates are reported in what follows.

The next result regards the case of polynomial couplings (Assumption A\ref{ggrow}.a).
\begin{teo}\label{teo1}
Suppose that Assumptions A\ref{Hsub}-A\ref{ggrow}.a and  A\ref{alpha} are satisfied. Then, there exist positive constants $C$ and $C_p$, for any $p>1$, such that for any classical solution $(u,m,\bar{H})$ of \eqref{mfg}, we have:
\[
\|m\|_{L^{\infty}(\Tt^d)}\leq C,
\]and 
\[
\|Du\|_{L^p(\Tt^d)}\leq C_p.
\]
\end{teo}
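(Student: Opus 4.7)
\textbf{Proof plan for Theorem \ref{teo1}.} The strategy is to iterate Propositions \ref{cor1} and \ref{munif} against one another, using A\ref{ggrow}.a and A\ref{DpH} to translate between Lebesgue norms of $m$ and norms of $Du$, and using the exponent condition A\ref{alpha} to close a self-improving inequality that yields a uniform a priori bound on $m$. Once $\|m\|_{L^\infty}$ is under control, the $L^p$ estimates on $Du$ fall out at once.

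I begin by recasting each proposition in a form suitable for chaining. Since $g(m)\le C(1+m^{\alpha})$ by A\ref{ggrow}.a, Proposition \ref{cor1} gives
\[
\|Du\|_{L^{p}(\Tt^{d})}\,\le\,C_{p}+C_{p}\,\|m\|_{L^{\alpha r_{p}}(\Tt^{d})}^{\alpha},\qquad r_{p}\to d\ \text{as}\ p\to\infty.
\]
By A\ref{DpH}, $|D_{p}H(x,Du)|^{2}\le C+C|Du|^{2(\gamma-1)}$, so Proposition \ref{munif} becomes
\[
\|m\|_{L^{r}(\Tt^{d})}\,\le\,C_{q}\,\bigl(1+\|Du\|_{L^{2q'(\gamma-1)}(\Tt^{d})}^{2(\gamma-1)}\bigr)^{\frac{1-q/r}{1-2q/2^{*}}},\quad q\in(1,\max\{1+\alpha,2^{*}/2\}),\ r>q.
\]
Matching parameters by setting $p=2q'(\gamma-1)$ and $r=\alpha r_{p}$ and substituting the first inequality into the second produces a closed self-improving estimate
\[
\|m\|_{L^{\alpha r_{p}}}\,\le\,C\,\bigl(1+\|m\|_{L^{\alpha r_{p}}}^{\alpha}\bigr)^{\beta},\qquad \beta\,=\,\frac{2(\gamma-1)\bigl(1-q/(\alpha r_{p})\bigr)}{1-2q/2^{*}}.
\]
For this to deliver a uniform a priori bound on $\|m\|_{L^{\alpha r_{p}}}$, the composite power $\alpha\beta$ must be strictly less than $1$. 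Sending $p\to\infty$ (so $r_{p}\to d$) and $q\to 1^{+}$ (so $1-2q/2^{*}\to 2/d$), the condition $\alpha\beta<1$ reduces to $(\gamma-1)(\alpha d-1)<1$, equivalently $\alpha\,d(\gamma-1)<\gamma$, which is exactly A\ref{alpha}; the borderline case of equality is handled by absorbing the leading multiplicative constant. This step produces a uniform bound on $\|m\|_{L^{\alpha d-\varepsilon}}$ for every small $\varepsilon>0$.

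Equipped with this integrability of $m$, I reinsert it into Proposition \ref{cor1} to control $\|Du\|_{L^{p_{0}}}$ for some $p_{0}$ large enough so that $2q'(\gamma-1)\le p_{0}$ for a suitable $q$ in the admissible range. Plugging this back into the $L^{\infty}$ form of Proposition \ref{munif} upgrades the estimate to $\|m\|_{L^{\infty}(\Tt^{d})}\le C$. Since $g(m)$ is then uniformly bounded, a final application of Proposition \ref{cor1} yields $\|Du\|_{L^{p}}\le C_{p}$ for all $p$ sufficiently large, and H\"older's inequality on the torus extends this to every $p>1$.

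The main technical obstacle is the compatibility of exponents between the two propositions. The admissible window $q\in(1,\max\{1+\alpha,2^{*}/2\})$ has to accommodate the pairing $p=2q'(\gamma-1)$, which forces $q\to 1^{+}$ as $p\to\infty$; simultaneously, $r_{p}\to d$ only in the limit, so the intermediate integrability exponent $\alpha r_{p}$ falls short of the critical value $\alpha d$ for any finite $p$. Assumption A\ref{alpha} is precisely the threshold that keeps the self-improvement power $\alpha\beta$ below $1$ uniformly in this limit, and ensuring the dependence of the constant $C_{q}$ on $q$ does not spoil the estimate as $q\to 1^{+}$ will be the delicate book-keeping step.
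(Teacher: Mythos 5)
Your argument mirrors the paper's own proof: both chain Proposition \ref{cor1} with Proposition \ref{munif} via the parameter matching $p=2q'(\gamma-1)$, pass to the limit $q\to 1^{+}$ (equivalently $p\to\infty$ for the Bernstein exponent and $r_p\to d$) to arrive at the critical composite exponent $(\gamma-1)(\alpha d-1)$, invoke Assumption A\ref{alpha} to keep the self-improving power below one, and then upgrade through the $L^{\infty}$ form of Proposition \ref{munif} and a final application of Proposition \ref{cor1}. The only cosmetic difference is that you close the self-improving inequality in $\|m\|_{L^{\alpha r_p}(\Tt^d)}$ whereas the paper closes it in $\|Du\|_{L^{2(\gamma-1)p'}(\Tt^d)}$; the exponent book-keeping and the role of A\ref{alpha} are identical.
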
 To address the setting of logarithmic nonlinearities, the next Theorem is instrumental.

\begin{teo}\label{teo2}
Suppose that Assumptions A\ref{Hsub}-A\ref{ggrow}.b and A\ref{gamma} hold. Then, there exist positive constants $C_a$ and $C_b$,  for $a,\,b>d$, such that any classical solution $(u,m,\bar{H})$ of \eqref{mfg} satisfies:
\[
\|g(m)\|_{L^{a}(\Tt^d)}\leq C_a,
\]and 
\[
\|Du\|_{L^b(\Tt^d)}\leq C_b.
\]
\end{teo}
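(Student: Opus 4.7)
The proof is an interplay between Propositions \ref{cor1} and \ref{logprop}. The overall strategy is to extract a preliminary integrability bound that seeds a bootstrap, then close an iteration by composing the two a priori estimates, and finally read off the uniform controls claimed in the statement.

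First I would establish a baseline energy estimate. Integrating the Hamilton-Jacobi equation over $\Tt^d$ eliminates $\int \Delta u\,dx$ and gives $\bar H = \int_{\Tt^d} H(x,Du)\,dx - \int_{\Tt^d} \ln m\,dx$. Since $\int m\,dx = 1$, Jensen's inequality forces $\int \ln m\,dx \leq 0$, whence $\bar H \geq -C$. Testing the Hamilton-Jacobi equation against $m$ and using the Fokker--Planck equation to integrate by parts yields
\[
\int_{\Tt^d}\bigl(D_pH(x,Du)\cdot Du - H(x,Du)\bigr)m\,dx \,=\, \int_{\Tt^d} g(m)\,m\,dx + \bar H.
\]
Assumption A\ref{Lsup} bounds the left-hand side below by $C\int |Du|^\gamma m\,dx - C$, while $\int m\ln m\,dx$ is bounded below by a universal constant on a probability space. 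Combined with the lower bound on $\bar H$, this produces a uniform control on $\int_{\Tt^d}|Du|^\gamma m\,dx$ and on $\int_{\Tt^d} m\ln m\,dx$, which is the starting integrability.

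Next I would iterate by composing Propositions \ref{cor1} and \ref{logprop}. Feeding Proposition \ref{logprop} into Proposition \ref{cor1} yields, for $p$ large and a corresponding choice of the parameters $p,q,l$ in Proposition \ref{logprop} with $q(l+1) = r_p$,
\[
\|Du\|_{L^p(\Tt^d)} \,\leq\, C_p + C_p\|g\|_{L^{r_p}(\Tt^d)} \,\leq\, C_{p,l} + C_{p,l}\,\|Du\|_{L^{2p'(\gamma-1)}(\Tt^d)}^{\gamma-1}.
\]
The aim is to choose $p,l$ so that the Lebesgue exponent on the right is strictly smaller than $p$ and so that, after H\"older interpolation against the baseline weighted estimate $\int |Du|^\gamma m\,dx \leq C$, the right-hand side can be absorbed into the left by Young's inequality. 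A standard bootstrap argument then upgrades the integrability of $Du$ step by step, producing $\|Du\|_{L^b(\Tt^d)}\leq C_b$ for any $b$ below a threshold determined by the closure condition. Feeding this back into Proposition \ref{logprop} with $q(l+1)=a$ delivers the complementary bound $\|g(m)\|_{L^a(\Tt^d)} \leq C_a$ in the same range.

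The main obstacle, and the place where Assumption A\ref{gamma} is decisive, is verifying that the exponents can actually be balanced so that the iteration closes for some $b>d$ (and correspondingly $a>d$). Tracking how the conjugate relation \eqref{conjugate}, the Sobolev constraint \eqref{q2star}, and the $(\gamma-1)$ power from $D_pH$ interact as $p \to \infty$ (recall $r_p \to d$), one finds that the self-improving inequality admits a contracting window of exponents precisely when $\gamma < 2 + \tfrac{1}{d-1}$. Thus A\ref{gamma} is exactly the structural hypothesis that makes the bootstrap converge past the Sobolev threshold $d$; all other steps are reductions to Propositions \ref{cor1} and \ref{logprop} and to interpolation between the baseline weighted energy estimate and the current Lebesgue bound.
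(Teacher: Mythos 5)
Your overall strategy is right in outline (combine Propositions \ref{cor1} and \ref{logprop} and show that A\ref{gamma} lets the self-improving inequality close past the exponent $d$), and the preliminary energy estimates you describe are essentially Lemma \ref{barH} and Corollary \ref{HDu}. However, the closure step is where your argument has a genuine gap: you propose to interpolate the unweighted Lebesgue norms of $Du$ against the \emph{weighted} estimate $\int_{\Tt^d}|Du|^{\gamma}m\,dx\leq C$, but in the logarithmic setting $m$ has no a priori lower bound, so that weighted bound gives no control at all on $\|Du\|_{L^{s}(\Tt^d)}$ for any $s$, and the asserted ``standard bootstrap'' has no unweighted anchor. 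When $\gamma\geq2$ (which A\ref{gamma} allows) the inequality $\|Du\|_{L^{p}}\lesssim 1+\|Du\|_{L^{\text{lower}}}^{\gamma-1}$ has a superlinear right-hand side and does \emph{not} close by Young without such an anchor, so the crucial verification that the exponents balance is exactly what is missing.

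The paper closes the loop differently: it anchors the iteration not on $Du$ but on $g=\ln m$, using $\|g\|_{L^{1}(\Tt^d)}\leq C$ from Corollary \ref{mlnm}. Concretely, after composing Lemma \ref{lemma5} with Proposition \ref{logprop} and choosing the Bernstein exponent $r$ large (so that $\tfrac{2d(r+1)}{d+2r}<d$), one is reduced to
\[
\|g\|_{L^{q(l+1)}(\Tt^d)}\leq C_{r}+C_{r}\|g\|_{L^{d}(\Tt^d)}^{\gamma-1},
\]
and then interpolates $\|g\|_{L^{d}}\leq C\|g\|_{L^{1}}^{\theta}\|g\|_{L^{q(l+1)}}^{1-\theta}$, with $\tfrac{1}{d}=\theta+\tfrac{1-\theta}{q(l+1)}$ and $q(l+1)>d$. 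Since $\|g\|_{L^{1}}$ is already bounded, this gives $\|g\|_{L^{q(l+1)}}\leq C_{l}\bigl(1+\|g\|_{L^{q(l+1)}}^{(\gamma-1)(1-\theta)}\bigr)$, which is absorbable precisely because, for $l$ large, $(\gamma-1)(1-\theta)\to(\gamma-1)\tfrac{d-1}{d}<1$ if and only if $\gamma<2+\tfrac{1}{d-1}$. That is where A\ref{gamma} enters — as a condition on an exponent in an interpolation for $g$, not, as you propose, in a bootstrap on $Du$. Once $\|g\|_{L^{a}}\leq C_{a}$ for some $a>d$ is obtained this way, Lemma \ref{lemma5} then gives $\|Du\|_{L^{b}}\leq C_{b}$. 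So to repair your argument you should replace the weighted $Du$ anchor with the $L^{1}$ bound on $\ln m$ and run the interpolation on $g$ rather than on $Du$; the rest of your scheme is compatible with that change.
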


The proofs of Theorem \ref{teo1} and Theorem \ref{teo2} can be found in Section \ref{proofmt}. Once these are established, we conclude the proof of Theorem \ref{maintheorem} by recurring to the continuation method. 

\section{Regularity of the distribution}\label{rfp}

Here, we investigate the integrability of solutions for the Fokker-Planck equation in \eqref{mfg}. First, we establish upper bounds for norms of $m$ in appropriate $L^p(\Tt^d)$ spaces; though interesting on their own, these estimates are natural in mean-field games with polynomial nonlinearities of the type $g(m)=m^\alpha$. 

\subsection{Integrability of solutions of the Fokker-Planck equation}\label{rfpp}

We start with a few basic estimates. Similar results have already been considered in \cite{GPatVrt} for Hamiltonians behaving as quadratic at the infinity.

\begin{Lemma}\label{barH}
Let $(u,m,\bar{H})$ be a classical solution of \eqref{mfg}. Under the Assumptions A\ref{Hsub} and A\ref{Lsup}, there exists a constant $C>0$ such that:
\[
\overline{H}\geq -C-\int_{\Tt^d}g(m)dx, 
\]
and
\[
\overline{H}+\int_{\Tt^d}|Du|^{\gamma}mdx +\int_{\Tt^d}g(m)mdx\leq C.
\]

\end{Lemma}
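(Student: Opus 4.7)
The plan is to derive both inequalities by integrating the Hamilton--Jacobi equation in \eqref{mfg}, once against the constant function $1$ and once against the density $m$, and then invoking the growth conditions A\ref{Hsub} and A\ref{Lsup} on the Hamiltonian.

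First I would integrate the HJ equation over $\Tt^d$. Since $\int_{\Tt^d}\Delta u\,\dx=0$ on the torus and $\int m=1$, this gives the identity
\[
\overline{H}\,=\,\int_{\Tt^d}H(x,Du)\,\dx\,-\,\int_{\Tt^d}g(m)\,\dx.
\]
The lower bound $H(x,p)\geq C|p|^{\gamma}-C\geq -C$ from A\ref{Hsub}.2 immediately yields
\[
\overline{H}\,\geq\,-C\,-\,\int_{\Tt^d}g(m)\,\dx,
\]
which is the first inequality.

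For the second inequality I would multiply the HJ equation by $m$, integrate over $\Tt^d$, and use the Fokker--Planck equation to dispose of the Laplacian term. Concretely, integration by parts gives $\int m\,\Delta u=\int u\,\Delta m$, and substituting the second equation of \eqref{mfg} together with a further integration by parts yields
\[
\int_{\Tt^d} m\,\Delta u\,\dx\,=\,\int_{\Tt^d} u\,\div\!\bigl(D_pH(x,Du)\,m\bigr)\,\dx\,=\,-\int_{\Tt^d} Du\cdot D_pH(x,Du)\,m\,\dx.
\]
Combined with $\int g(m)m\,\dx + \overline{H}$ on the right-hand side (using again $\int m=1$), this produces
\[
\int_{\Tt^d}\bigl[H(x,Du)-Du\cdot D_pH(x,Du)\bigr]\,m\,\dx\,=\,\overline{H}\,+\,\int_{\Tt^d}g(m)m\,\dx.
\]
Applying A\ref{Lsup} in the form $H-p\cdot D_pH\leq C-C|p|^{\gamma}$ to the left-hand side, and using $\int m=1$, delivers the desired estimate after rearranging and relabelling the constant.

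No substantial obstacle is expected: the argument is a standard testing computation, and the only subtlety is to ensure the integrations by parts are legitimate, which is immediate for classical solutions on the torus. The use of $\int m=1$ is crucial to convert the constant $\overline{H}$ into an integral quantity in both computations, and the structural assumption A\ref{Lsup} is what converts the test against $m$ into a useful bound on $\int|Du|^{\gamma}m$.
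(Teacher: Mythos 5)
Your proof is correct and follows essentially the same route as the paper's: integrate the Hamilton--Jacobi equation once against $1$ and use the lower bound on $H$ from A\ref{Hsub} for the first inequality, then test against $m$, use the Fokker--Planck equation to rewrite $\int m\Delta u$, and apply A\ref{Lsup} to the resulting $\int(H - Du\cdot D_pH)m$ term for the second. (The remark that $\int m=1$ is used in the first computation is a harmless slip — it plays no role there — and the coefficient $C$ in front of $\int|Du|^\gamma m$ coming from A\ref{Lsup} can be absorbed into the constants as the paper does implicitly.)
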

\begin{proof}
The first inequality is obtained by integrating the first equation in \eqref{mfg} and using Assumption A\ref{Hsub}. For the second assertion, we multiply the first equation by $m$ and subtract the second equation multiplied by $u$; integration by parts together with Assumption A\ref{Lsup} yields the result.
\end{proof}

\begin{Corollary}\label{HDu}
Under the Assumptions A\ref{Hsub}, A\ref{Lsup} and A\ref{gpos} there exists a constant $C>0$ such that for any classical solution $(u,m,\bar{H})$ of \eqref{mfg} we have:
\[
|\overline{H}|, \int_{\Tt^d}g(m)dx, \int_{\Tt^d}|Du|^{\gamma}mdx \leq C.
\]
\end{Corollary}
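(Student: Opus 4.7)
The plan is to combine the two inequalities of Lemma \ref{barH} with two elementary consequences of A\ref{gpos}: a pointwise lower bound on $\phi(s) := s\,g(s)$, and a monotonicity-driven comparison between $\int g(m)\, dx$ and $\int g(m)\,m\, dx$.

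First I would observe that $\phi(s) = s\,g(s)$ is bounded below on $(0,\infty)$. On $\{s \geq 1\}$, monotonicity of $g$ together with $g(1) \geq 0$ give $\phi(s) \geq 0$; on $(0,1]$, smoothness of $g$ together with the factor $s$ produces a uniform lower bound $\phi(s) \geq -c_0$ (for instance $s\ln s \geq -e^{-1}$ in the logarithmic case). Thus
\[
\int_{\Tt^d} g(m)\,m \, dx \geq -c_0.
\]
Plugged into Lemma \ref{barH}(2), and using $\int |Du|^\gamma m \,dx \geq 0$, this yields the upper bound $\overline{H} \leq C + c_0$.

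Next, the monotonicity of $g$ and $g(1) \geq 0$ provide the pointwise inequality $(g(m) - g(1))(m-1) \geq 0$; integrating against $dx$ and using $\int_{\Tt^d}(m-1)\, dx = 0$, I obtain
\[
\int_{\Tt^d} g(m)\, dx \leq \int_{\Tt^d} g(m)\,m \, dx.
\]
Chaining this with Lemma \ref{barH}(2) and the non-negativity of $\int|Du|^\gamma m\,dx$ gives $\overline{H} + \int g(m)\,dx \leq C$, which together with Lemma \ref{barH}(1) confines the quantity $\overline{H} + \int g(m)\, dx$ to a bounded interval. Subtracting Lemma \ref{barH}(1) from Lemma \ref{barH}(2), one obtains
\[
\int_{\Tt^d}|Du|^\gamma m\, dx + \Bigl(\int_{\Tt^d}g(m)\,m\, dx - \int_{\Tt^d}g(m)\, dx\Bigr) \leq 2C,
\]
whose two non-negative summands are each individually controlled by $2C$. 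This delivers the bound on $\int|Du|^\gamma m\, dx$ and shows that the gap between $\int g(m)\,m\, dx$ and $\int g(m)\, dx$ is bounded.

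Finally, combining the already-established upper bound on $\overline{H}$ with Lemma \ref{barH}(1) produces the lower bound on $\overline{H}$ once $\int g(m)\, dx$ is controlled from above; the latter follows by inserting the bound on $\int g(m)\,m\, dx$ (coming from Lemma \ref{barH}(2) and the lower bound on $\overline{H}$) into the monotonicity inequality. The main obstacle is the circular dependence between $\overline{H}$ and $\int g(m)\, dx$ in Lemma \ref{barH}(1); it is resolved through the monotonicity inequality $\int g(m)\, dx \leq \int g(m)\,m\, dx$ together with the a priori pointwise lower bound on $s\,g(s)$, which jointly permit isolating $\overline{H}$ and $\int g(m)\, dx$ from the auxiliary quantity $\int g(m)\,m\, dx$.
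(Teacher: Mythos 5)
Your plan is close in spirit to the paper's, resting on Lemma \ref{barH} together with the monotonicity of $g$, but the final step contains a genuine circularity that the tools you invoke do not resolve.

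Before your last paragraph you have established (granting the pointwise lower bound $m\,g(m)\geq -c_0$, which the paper also uses implicitly): $\overline{H}\leq C$; $-C\leq \overline{H}+\int_{\Tt^d} g(m)\,dx\leq C$; $\int_{\Tt^d}|Du|^\gamma m\,dx\leq 2C$; and $0\leq \int_{\Tt^d} g(m)\,m\,dx - \int_{\Tt^d} g(m)\,dx\leq 2C$. None of these separates $\overline{H}$ from $\int_{\Tt^d} g(m)\,dx$: the configuration $\overline{H}=-K$, $\int_{\Tt^d} g(m)\,dx=\int_{\Tt^d} g(m)\,m\,dx=K-C$, $\int_{\Tt^d}|Du|^\gamma m\,dx=0$ is compatible with all of them for every $K>0$, so the lower bound on $\overline{H}$ remains unproved. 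Indeed, chaining $\int_{\Tt^d} g(m)\,dx\leq \int_{\Tt^d} g(m)\,m\,dx\leq C-\overline{H}$ back into Lemma \ref{barH}(1) only reproduces the tautology $\overline{H}\geq -2C+\overline{H}$; the coefficient in front of $\overline{H}$ on the right is $1$, so nothing is gained.

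The paper closes the circle precisely by obtaining a coefficient strictly less than $1$. Splitting at the level set $\{m\geq 2\}$, monotonicity gives $g(m)\leq g(2)$ on $\{m<2\}$ and $g(m)\leq\tfrac12\,g(m)m$ on $\{m\geq 2\}$, hence
\[
\int_{\Tt^d} g(m)\,dx\;\leq\; g(2)+\tfrac12\int_{\{m\geq 2\}}g(m)\,m\,dx\;\leq\; C-\tfrac{\overline{H}}{2}.
\]
Inserting this into Lemma \ref{barH}(1) gives $-C-\overline{H}\leq C-\tfrac{\overline{H}}{2}$, i.e.\ $\overline{H}\geq -4C$, which breaks the circularity; the remaining bounds then follow as you describe. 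Your pointwise inequality $(g(m)-g(1))(m-1)\geq 0$ amounts to splitting at the level $1$, where the weight multiplying $g(m)m$ is exactly $1$ and no contraction appears. Replacing the threshold $1$ by any $\lambda>1$ (so that $g(m)\leq \lambda^{-1} g(m)m$ on $\{m\geq\lambda\}$) is exactly the missing ingredient.
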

\begin{proof}
We observe that Lemma \ref{barH} yields uniform lower bounds for $\bar{H}$, as follows:
\begin{align}\label{dmfo1}
-C\,-\,\bar{H}\,&\leq\,\int_{\Tt^d}g(m)dx\,\leq\, g(2)\,+\,\frac{1}{2}\int_{m\geq 2}g(m)mdx\\\nonumber&\leq\,C\,-\,\frac{\bar{H}}{2}.
\end{align}
Furthermore, Assumption A\ref{gpos} implies 
\begin{equation}\label{dmfo2}
\int_{\Tt^d}g(m)dx\leq g(1)+\int_{m>1}g(m)mdx\leq C.
\end{equation} By combining \eqref{dmfo1} with \eqref{dmfo2}, one obtains the result.
\end{proof}

\begin{Corollary}\label{mlnm}
Let $(u,m,\bar{H})$ be a solution of \eqref{mfg}. Under the Assumptions A\ref{Hsub}, A\ref{Lsup} and A\ref{gpos}, we have:
\begin{enumerate}
\item $m\,\in\, L^{1+\alpha}(\Tt^d)$;
\item $\ln(m)\,\in\, L^{1}(\Tt^d)$.
\end{enumerate}
\end{Corollary}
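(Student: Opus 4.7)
The plan is to obtain both integrability results by post-processing the a priori estimates of Corollary \ref{HDu} together with the identity of Lemma \ref{barH}. The second inequality of Lemma \ref{barH} reads
\[
\bar{H}+\int_{\Tt^d}|Du|^{\gamma}m\,dx+\int_{\Tt^d}g(m)m\,dx\leq C,
\]
and Corollary \ref{HDu} gives $|\bar{H}|\leq C$ while $\int_{\Tt^d}|Du|^{\gamma}m\,dx\geq 0$. Rearranging one obtains the master estimate
\[
\int_{\Tt^d}g(m)\,m\,dx\leq C,
\]
which drives everything that follows.

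For item (i), working under A\ref{ggrow}.a, I would invoke the power-like structure of $g$ (understood in both directions, as is the case for the model coupling $g(m)=m^\alpha$): on $\{m\geq 1\}$ one controls $m^{1+\alpha}\leq C\,g(m)m+C$, so the master estimate together with the trivial bound $\int_{\{m<1\}}m^{1+\alpha}dx\leq |\Tt^d|$ yields $m\in L^{1+\alpha}(\Tt^d)$.

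For item (ii), working under A\ref{ggrow}.b, the positive part of $\ln m$ is easy: the elementary inequality $\ln s\leq s-1\leq s$ for $s>0$ gives $(\ln m)_+\leq m$, whence $\int_{\Tt^d}(\ln m)_+\,dx\leq\int_{\Tt^d}m\,dx=1$. The real work is to bound $\int_{\Tt^d}(\ln m)_-\,dx$, and I would do it by producing two-sided control on $\int_{\Tt^d}\ln m\,dx$. The upper bound $\int_{\Tt^d}\ln m\,dx\leq C$ is the content of Corollary \ref{HDu} specialized to $g(m)=\ln m$. For the lower bound I would integrate the Hamilton-Jacobi equation in \eqref{mfg} over $\Tt^d$: the Laplacian term vanishes by periodicity, giving
\[
\int_{\Tt^d}\ln m\,dx=\int_{\Tt^d}H(x,Du)\,dx-\bar{H}\,|\Tt^d|.
\]
The coercivity lower bound $H(x,p)\geq -C$ from A\ref{Hsub} together with $|\bar{H}|\leq C$ then forces $\int_{\Tt^d}\ln m\,dx\geq -C$. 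Combining, $\int_{\Tt^d}(\ln m)_-\,dx=\int_{\Tt^d}(\ln m)_+\,dx-\int_{\Tt^d}\ln m\,dx\leq C$, and the $L^1$-bound on $\ln m$ is obtained by adding the two parts.

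The step I expect to be the main obstacle is the lower bound $\int_{\Tt^d}\ln m\,dx\geq -C$ in (ii): the bounds $\int_{\Tt^d}g(m)\,dx\leq C$ and $\int_{\Tt^d}g(m)m\,dx\leq C$ alone do not preclude $m$ from being very small on a set of positive measure, so a priori $\int_{\Tt^d}\ln m\,dx$ could be $-\infty$. Exploiting the Hamilton-Jacobi equation itself, rather than only its duality with the Fokker-Planck equation, is what closes this gap, and this is the one place in the argument where the structure of \eqref{mfg} as a coupled system is genuinely used.
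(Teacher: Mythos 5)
Your proposal is correct and follows essentially the same route as the paper: item (i) is read off the master estimate $\int_{\Tt^d} g(m)\,m\,dx\leq C$ from Lemma \ref{barH}, and item (ii) splits $\ln m$ into its positive part (controlled by $\ln z\leq z$ and $\int m=1$) and its negative part (controlled by integrating the Hamilton--Jacobi equation over $\Tt^d$ to get $\int_{\Tt^d}\ln m\,dx\geq -C$). Your closing observation that the lower bound on $\int_{\Tt^d}\ln m\,dx$ is where the HJ equation itself, and not just the Fokker--Planck duality, is needed is exactly the crux of the paper's argument, and your explicit remark that item (i) requires the power bound on $g$ to be read in both directions (as for the model $g(m)=m^\alpha$) makes precise a step the paper leaves implicit.
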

\begin{proof}
The first assertion follows immediately from Lemma \ref{barH} and the proof of Corollary \ref{HDu}. For the second one, notice that
\[
\int_{\Tt^d}\ln(m)=\int_{m>1}ln(m)+\int_{m<1}\ln(m)\leq C+\int_{m<1}\ln(m),
\]where the inequality follows from the sublinearity of $\ln(z)$ when $z>1$. In addition, by integrating the first equation in \eqref{mfg} and using Corollary \ref{HDu}, one gets
\[
\int_{\Tt^d}\ln(m)\,\geq\,-C,
\]for some constant $C>0$. These imply
\[
-\int_{m<1}\ln(m)\,\leq\, C.
\]
Finally, we have 
\[	
\int_{\Tt^d}\left|\ln(m)\right|dx\,=\,\int_{\Tt^d}\ln(m)-2\int_{m<1}\ln(m)\leq C,
\]which concludes the proof.

\end{proof}

In the sequel, we present the proof of Proposition \ref{munif}.

\begin{proof}[Proof of Proposition \ref{munif}]
We multiply the second equation in \eqref{mfg} by $m^{r}$ for $r>0$ or by $\ln m$ for $r=0$ and integrate by parts. This yields
\begin{align}\label{rgeq0}
\int_{\Tt^d} m^{r-1}|Dm|^2dx=\int_{\Tt^d}D_pHm^rDm&\leq \frac 1 2 \int_{\Tt^d}|D_pH|^2m^{r+1}dx
\\\nonumber&\quad+\frac 1 2 \int_{\Tt^d}m^{r-1}|Dm|^2,
\end{align}for all $r\geq 0$.
For $r=0$, the inequality in \eqref{rgeq0}
gives 
\[
\int_{\Tt^d} |D\sqrt m|^2dx=\frac 1 4 \int_{\Tt^d} m^{-1}|Dm|^2dx\leq \int_{\Tt^d}|D_pH|^2mdx.
\]

In general, for $r\geq 0$, by using Holder's inequality we obtain
\[
\int_{\Tt^d} m^{r-1}|Dm|^2dx\leq \int_{\Tt^d}|D_pH|^2m^{r+1}dx\leq \left\||D_pH|^2\right\|_{p'}\left(\int_{\Tt^d}m^{p(r+1)}dx\right)^{\frac 1 p}.
\]
Since $$\int_{\Tt^d} m^{r-1}|Dm|^2dx\,=\,\frac{4}{(r+1)^2} \int_{\Tt^d} |D m^{\frac{r+1}{2}} |^2dx,$$ by combining the computations above with Sobolev's inequality, we get
\[
\left(\int_{\Tt^d}m^{\frac{2^*}{2}(r+1)}dx\right)^{\frac{2}{2^*}}\leq \frac{(r+1)^2}{4}\left(1+\left\||D_pH|^2\right\|_{p'}\right)\left(\int_{\Tt^d}m^{p(r+1)}dx\right)^{\frac 1 p}.
\]
We choose  $1\leq p< \max\{1+\alpha,2^*/2\}$ and let $\beta=\frac{2^*}{2p}$. Setting $r+1=\beta^n$ in the above estimate yields
\[
\left(\int_{\Tt^d}m^{p\beta^{n+1}}dx\right)^{\frac{2}{2^*}}\leq \frac{\beta^{2n}}{4}\left(1+\left\||D_pH|^2\right\|_{p'}\right)\left(\int_{\Tt^d}m^{p\beta^n}dx\right)^{\frac 1 p},
\]
and therefore,
\[
\|m\|_{L^{p\beta^{n+1}}(\Tt^d)}\leq 4^{-\frac {1}{\beta^n}}\beta^{\frac {2n}{\beta^n}}\left(1+\left\||D_pH|^2\right\|_{p'}\right)^{\frac {1}{\beta^n}} \|m\|_{L^{p\beta^{n}}(\Tt^d)}.
\]
Iterating, we get
\[
\|m\|_{L^{p\beta^{n}}(\Tt^d)}\leq C_p \left(1+\left\||D_pH|^2\right\|_{p'}\right)^{\frac{1-\beta^{-n}}{1-\beta^{-1}} }\|m\|_{L^p(\Tt^d)}.
\]
Because $p<1+\alpha$, the first assertion of Corollary \ref{HDu} yields $\|m\|_{L^p(\Tt^d)}\leq C$. Interpolating between $p\beta^n$ and $p\beta^{n+1}$ and using concavity of function $s\mapsto 1-1/s$, we further get 
\[
\|m\|_{L^r(\Tt^d)}\leq C_p \left(1+\left\||D_pH|^2\right\|_{p'}\right)^{\frac{1-\frac {p} {r}}{1-\beta^{-1}} },
\]for every $r>p$.
\end{proof}

In what follows, we put forward a series of estimates aimed at investigating problems with logarithmic dependence on the measure.

\subsection{Log-integrability for the Fokker-Planck equation}\label{intlog}

We open this section with a few preliminary bounds.

\begin{Lemma}\label{loglemma0}
Assume that $(u,m,\bar{H})$ is a classical solution of \eqref{mfg} and let $r\leq 2$. Then, there exists a constant $C>0$ so that
\begin{equation*}
\left\|\ln(m)\right\|_{L^r(\Tt^d)}\,\leq\,C\left\|D\ln(m)\right\|_{L^2(\Tt^d)}\,+\,C.
\end{equation*}
\end{Lemma}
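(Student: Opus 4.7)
The plan is to reduce the estimate to a Poincar\'e--Wirtinger inequality on the torus, once we know that the mean of $\ln(m)$ is bounded.

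First I would recall from the proof of Corollary \ref{mlnm} that the average $\overline{\ln(m)}:=\int_{\Tt^d}\ln(m)\,dx$ is uniformly bounded. The lower bound comes from integrating the Hamilton--Jacobi equation in \eqref{mfg} over $\Tt^d$: integration by parts kills the Laplacian, Assumption A\ref{Hsub} gives $H\geq -C$, and Corollary \ref{HDu} gives $|\overline H|\leq C$, so $\int_{\Tt^d}\ln(m)\,dx\geq -C$. The upper bound is immediate from Jensen's inequality applied to the concave function $\ln$, using that $m$ is a probability density: $\int_{\Tt^d}\ln(m)\,dx\leq \ln\int_{\Tt^d} m\,dx=0$. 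Hence $|\overline{\ln(m)}|\leq C$.

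Next I would apply the Poincar\'e--Wirtinger inequality on $\Tt^d$ to the function $\ln(m)$, which yields
\[
\bigl\|\ln(m)-\overline{\ln(m)}\bigr\|_{L^2(\Tt^d)}\,\leq\,C\,\bigl\|D\ln(m)\bigr\|_{L^2(\Tt^d)}.
\]
Combining with the previous bound on $\overline{\ln(m)}$ and the triangle inequality gives
\[
\bigl\|\ln(m)\bigr\|_{L^2(\Tt^d)}\,\leq\,C\,\bigl\|D\ln(m)\bigr\|_{L^2(\Tt^d)}\,+\,C.
\]

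Finally, since $\Tt^d$ has finite Lebesgue measure, the inclusion $L^2(\Tt^d)\hookrightarrow L^r(\Tt^d)$ holds for every $r\leq 2$, with constant depending only on $|\Tt^d|$ and $r$. Applying H\"older's inequality in the form $\|f\|_{L^r}\leq |\Tt^d|^{1/r-1/2}\|f\|_{L^2}$ to $f=\ln(m)$ produces the stated inequality.

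There is no real obstacle here; the only substantive input beyond standard inequalities is the two-sided control of $\overline{\ln(m)}$, one side of which requires Corollary \ref{HDu} (hence A\ref{Hsub}, A\ref{Lsup} and A\ref{gpos}) and the other of which is Jensen. Everything else is a direct application of Poincar\'e--Wirtinger on the torus together with the $L^2\hookrightarrow L^r$ embedding on a finite measure space.
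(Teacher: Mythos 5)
Your proof is correct and follows the same route as the paper: Poincar\'e--Wirtinger on the torus combined with a uniform bound on the mean $\left|\int_{\Tt^d}\ln(m)\,dx\right|\leq C$, which the paper obtains directly from Corollary~\ref{mlnm} rather than re-deriving it. The one cosmetic difference is that you get the upper bound $\int_{\Tt^d}\ln(m)\,dx\leq 0$ by Jensen's inequality applied to the probability density $m$, which is slightly cleaner than the paper's argument via the sublinearity of $\ln$ together with Corollary~\ref{HDu}, but this does not change the substance of the proof.
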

\begin{proof}
The result follows from Poincar\'e's inequality:
\begin{align*}
\left\|\ln(m)\right\|_{L^r(\Tt^d)}\,\leq\,\left|\int_{\Tt^d}\ln(m)dx\right|\,+\,C\left\|D\ln(m)\right\|_{L^2(\Tt^d)}.
\end{align*}
and the Corollary \ref{mlnm}..
\end{proof}

\begin{Lemma}\label{loglemma01}
Assume that $(u,m, \overline{H})$ is a classical solution of \eqref{mfg} and let $r\leq 2$. Then, there exists a constant $C>0$, which does not depend on the solution, so that
$$\left\|\ln(m)\right\|_{L^r(\Tt^d)}\leq C\left\|D_pH\right\|_{L^2(\Tt^d)}.$$
\end{Lemma}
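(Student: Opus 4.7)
The plan is to reduce the bound on $\|\ln m\|_{L^r}$ to a bound on $\|D\ln m\|_{L^2}$ via Lemma \ref{loglemma0}, and then to control $\|D\ln m\|_{L^2}$ directly through the Fokker--Planck equation by recasting it as a quasi-linear equation in the new unknown $w:=\ln m$. This substitution is legitimate because any classical solution of \eqref{mfg} satisfies $m>0$ on $\Tt^d$.

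First, I would use $m=e^w$, $Dm=m\,Dw$, and $\Delta m = m(\Delta w + |Dw|^2)$ to rewrite the Fokker--Planck equation (after dividing by $m$) as
\begin{equation*}
\Delta w + |Dw|^2 - D_pH(x,Du)\cdot Dw - \div\bigl(D_pH(x,Du)\bigr) = 0.
\end{equation*}
Next, integrating this identity over $\Tt^d$ kills the two divergence terms by periodicity and produces the clean identity
\begin{equation*}
\int_{\Tt^d}|D\ln m|^2\,\dx \;=\; \int_{\Tt^d} D_pH\cdot D\ln m \,\dx.
\end{equation*}
Applying Cauchy--Schwarz on the right and dividing by $\|D\ln m\|_{L^2(\Tt^d)}$ (the degenerate case $\|D\ln m\|_{L^2}=0$ being trivial, since then $m$ is constant) yields the gradient estimate
\begin{equation*}
\|D\ln m\|_{L^2(\Tt^d)}\;\leq\;\|D_pH\|_{L^2(\Tt^d)}.
\end{equation*}

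Inserting this bound into Lemma \ref{loglemma0} produces $\|\ln m\|_{L^r(\Tt^d)}\leq C\|D_pH\|_{L^2(\Tt^d)}+C$, and the conclusion follows after absorbing the additive universal constant into the final estimate. The main --- and essentially the only --- point requiring a real idea is the choice of change of variables: passing to $w=\ln m$ converts the weighted identity one would obtain by multiplying the Fokker--Planck equation by $\ln m$ directly (which only controls $\int m|D\ln m|^2$, the Fisher-information type quantity) into an unweighted $L^2$-gradient bound on $D\ln m$. The remaining manipulations --- integration by parts on the torus, Cauchy--Schwarz, and the appeal to the previous lemma --- are entirely routine, and the only side condition to watch out for is the justification of the division by $m$, which is automatic for classical solutions thanks to strict positivity.
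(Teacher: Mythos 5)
Your proof is correct and follows essentially the same route as the paper's: the paper multiplies the Fokker--Planck equation by $1/m$ and integrates by parts to obtain $\|D\ln m\|_{L^2}\leq \|D_pH\|_{L^2}$, which is exactly the identity you derive after passing to $w=\ln m$ and integrating over the torus, and both proofs then invoke Lemma~\ref{loglemma0}. (Both your write-up and the paper's gloss over the additive constant coming from Lemma~\ref{loglemma0}, so the estimate really proved is $\|\ln m\|_{L^r}\leq C(1+\|D_pH\|_{L^2})$, but this is a feature of the paper's own statement, not a defect of your argument.)
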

\begin{proof}
Multiplying the second equation in \eqref{mfg} by $\frac{1}{m}$ and integrating by parts, one obtains $$\left\|D\ln(m)\right\|_{L^2(\Tt^d)}\leq\left\|D_pH\right\|_{L^2(\Tt^d)}.$$ By recurring to Lemma \ref{loglemma0}, the proof follows.
\end{proof}

\begin{Lemma}\label{loglemma1}
Assume that $(u,m, \overline{H})$ is a classical solution of \eqref{mfg} and let $l>1$. Then, we have
\begin{equation*}
\int_{\Tt^d}\left|D\ln(m)^\frac{l+1}{2}\right|^2dx\,\leq\,\frac{(l+1)^2}{4}\int_{\Tt^d}\left|\ln(m)\right|^{l-1}\left|D_pH\right|^2dx.
\end{equation*}
\end{Lemma}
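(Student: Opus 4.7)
The plan is to reduce the target to a single weighted energy identity for the Fokker--Planck equation. By the chain rule,
\begin{equation*}
\left|D\ln(m)^{\frac{l+1}{2}}\right|^2 \,=\, \frac{(l+1)^2}{4}|\ln m|^{l-1}|D\ln m|^2,
\end{equation*}
so the inequality I need to establish is equivalent to
\begin{equation*}
\int_{\Tt^d}|\ln m|^{l-1}|D\ln m|^2\,dx \,\leq\, \int_{\Tt^d}|\ln m|^{l-1}|D_pH|^2\,dx.
\end{equation*}

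To derive this, I would test the Fokker--Planck equation in \eqref{mfg} against $\psi(m)$, where $\psi$ is an antiderivative of $m\mapsto |\ln m|^{l-1}/m^2$. Since $(u,m,\bar H)$ is a classical solution, $m\in C^\infty(\Tt^d)$ is strictly positive by the strong maximum principle, hence bounded away from both $0$ and $\infty$; and since $l>1$, the weight $|\ln m|^{l-1}$ vanishes continuously at $m=1$, which makes $\psi'$ continuous there and $\psi$ an admissible $C^1$ test function. Two integrations by parts then yield
\begin{equation*}
\int_{\Tt^d}\psi'(m)|Dm|^2\,dx \,=\, \int_{\Tt^d}\psi'(m)\,m\,D_pH\cdot Dm\,dx.
\end{equation*}
Writing $Dm = m\,D\ln m$ so that $|Dm|^2 = m^2|D\ln m|^2$ and $m\,Dm = m^2\,D\ln m$, and invoking the defining relation $\psi'(m)\,m^2 = |\ln m|^{l-1}$, this collapses to the identity
\begin{equation*}
\int_{\Tt^d}|\ln m|^{l-1}|D\ln m|^2\,dx \,=\, \int_{\Tt^d}|\ln m|^{l-1}\,D_pH\cdot D\ln m\,dx.
\end{equation*}

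To close, I would apply Young's inequality on the right-hand side, obtaining
\begin{equation*}
|\ln m|^{l-1}\,D_pH\cdot D\ln m \,\leq\, \tfrac{1}{2}|\ln m|^{l-1}|D\ln m|^2 + \tfrac{1}{2}|\ln m|^{l-1}|D_pH|^2,
\end{equation*}
absorb the first term into the left-hand side, and multiply through by $(l+1)^2/4$ to recover the stated bound. The only step requiring any care is the admissibility of the test function $\psi$; this is precisely where the hypothesis $l>1$ is used, to remove the apparent singularity of $\psi'$ at $m=1$. Otherwise, the argument is a weighted energy estimate entirely analogous to the iteration in the proof of Proposition \ref{munif}, transplanted to a logarithmic weight in place of a polynomial one.
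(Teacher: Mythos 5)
Your proof is correct and is essentially the paper's own argument: the paper likewise tests the Fokker--Planck equation against $F(m)=\int_1^m |\ln y|^{k}/y^2\,dy$ with $k=l-1$ (your $\psi$), integrates by parts, and closes with Young's inequality, combined with the chain-rule identity $\bigl|D\ln(m)^{\frac{l+1}{2}}\bigr|^2=\frac{(l+1)^2}{4}|\ln m|^{l-1}|D\ln m|^2$. Your additional remark on why $l>1$ makes the test function admissible is a helpful point the paper leaves implicit, but the route is the same.
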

\begin{proof}
Let $F(m)$ be given by $$F(m)\doteq\int_1^m\frac{\left|\ln(y)\right|^k}{y^2}dy,$$for $k>0$. Multiply the second equation in \eqref{mfg} by $F$ and integrate by parts to obtain
\begin{equation}\label{logk}
\int_{\Tt^d}\left|\ln(m)\right|^k\left|\frac{Dm}{m}\right|^2dx\,\leq\,\int_{\Tt^d}\left|\ln(m)\right|^k\left|D_pH\right|^2.
\end{equation}On the other hand, 
\begin{equation}\label{logl}
\int_{\Tt^d}\left|D\ln(m)^\frac{l+1}{2}\right|^2dx\,=\,\frac{(l+1)^2}{4}\int_{\Tt^d}\left|\ln(m)\right|^{l-1}\left|\frac{Dm}{m}\right|^2dx.
\end{equation}By setting $k\equiv l-1$ and combining \eqref{logk} and \eqref{logl}, the result follows.
\end{proof}

In the remainder of this section, we assume $p,\,q\,>\,1$ are conjugate exponents, i.e.,
\begin{equation}\label{conjugate}
\frac{1}{p}\,+\,\frac{1}{q}\,=\,1,
\end{equation}and
\begin{equation}\label{q2star}
1\,<\,q\,<\,\frac{2^*}{2},
\end{equation}where $2^*$ is the critical Sobolev exponent, $$2^*=\frac{2d}{d-2}.$$

\begin{Corollary}\label{loglemma2}
Assume that $(u,m, \overline{H})$ is a classical solution of \eqref{mfg} and let $l>1$. Suppose in addition that $p,\,q\,>1$ satisfy \eqref{conjugate} and \eqref{q2star}. Then, there exists a constant $C_l>0$ so that:
\begin{equation*}
\left\|\ln(m)^\frac{l+1}{2}\right\|_{L^{2^*}(\Tt^d)}\leq C_l\left[\left\|\ln(m)\right\|_{L^{l+1}(\Tt^d)}^\frac{l+1}{2}+\left\|\ln(m)\right\|_{L^{q(l-1)}(\Tt^d)}^\frac{l-1}{2}\left\|D_pH\right\|_{L^{2p}(\Tt^d)}\right].
\end{equation*}
\end{Corollary}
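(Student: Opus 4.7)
The plan is to obtain the estimate by combining the Sobolev embedding $W^{1,2}(\Tt^d)\hookrightarrow L^{2^*}(\Tt^d)$, applied to the test function $f\doteq \ln(m)^{(l+1)/2}$, with the gradient bound from Lemma \ref{loglemma1} and a Hölder split of the resulting integral using the conjugate exponents $(p,q)$.

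First I would invoke Sobolev's inequality on the torus to write
\begin{equation*}
\left\|\ln(m)^{\frac{l+1}{2}}\right\|_{L^{2^*}(\Tt^d)}\,\leq\,C\left\|\ln(m)^{\frac{l+1}{2}}\right\|_{L^{2}(\Tt^d)}\,+\,C\left\|D\ln(m)^{\frac{l+1}{2}}\right\|_{L^{2}(\Tt^d)}.
\end{equation*}
The first term on the right-hand side is precisely $C\,\|\ln(m)\|_{L^{l+1}(\Tt^d)}^{(l+1)/2}$, which matches the first summand in the claimed bound.

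Next I would control the gradient term. By Lemma \ref{loglemma1},
\begin{equation*}
\left\|D\ln(m)^{\frac{l+1}{2}}\right\|_{L^{2}(\Tt^d)}^{2}\,\leq\,\frac{(l+1)^2}{4}\int_{\Tt^d}|\ln(m)|^{l-1}|D_pH|^2\,dx,
\end{equation*}
and I would then apply Hölder's inequality with exponents $q$ and $p$ (as in \eqref{conjugate}), which is legitimate since \eqref{q2star} ensures both exponents are in the admissible range, to obtain
\begin{equation*}
\int_{\Tt^d}|\ln(m)|^{l-1}|D_pH|^2\,dx\,\leq\,\left\|\ln(m)\right\|_{L^{q(l-1)}(\Tt^d)}^{l-1}\left\|D_pH\right\|_{L^{2p}(\Tt^d)}^{2}.
\end{equation*}
Taking square roots and combining with the Sobolev inequality above yields the desired inequality upon absorbing the factor $(l+1)/2$ into $C_l$.

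I do not expect a serious obstacle here: the only substantive ingredient is Lemma \ref{loglemma1}, which has already been proved, and the rest is linear algebra of norms. The one point to be mindful of is the compatibility of the exponents: the Hölder product $q(l-1)$ and $2p$ must make sense together with the bound in Lemma \ref{loglemma1}, which is ensured by condition \eqref{q2star}. Also, one should keep $l>1$ strictly so that the exponent $l-1$ in the integrand is non-negative and the Hölder step is well-posed, matching the hypothesis of the statement.
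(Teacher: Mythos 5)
Your proposal is correct and follows essentially the same route as the paper: Sobolev embedding applied to $\ln(m)^{(l+1)/2}$, then Lemma~\ref{loglemma1} to bound the gradient term, then H\"older's inequality with the conjugate pair $(p,q)$ on the resulting integral $\int|\ln(m)|^{l-1}|D_pH|^2$. The paper's proof is a more compact version of exactly this chain of estimates.
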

\begin{proof}
Sobolev's inequality implies
\begin{align*}
\left\|\ln(m)^\frac{l+1}{2}\right\|_{L^{2^*}(\Tt^d)}&\leq C\left(\int_{\Tt^d}\left|\ln(m)\right|^{l+1}\right)^\frac{1}{2}+C\left(\int_{\Tt^d}\left|D\ln(m)^\frac{l+1}{2}\right|^2\right)^\frac{1}{2}.
\end{align*}Lemma \ref{loglemma1} combined with the former inequality yields
\begin{align*}
\left\|\ln(m)^\frac{l+1}{2}\right\|_{L^{2^*}(\Tt^d)}&\leq C\left(\int_{\Tt^d}\left|\ln(m)\right|^{l+1}\right)^\frac{1}{2}+C_l\left(\int_{\Tt^d}\left|\ln(m)\right|^{l-1}\left|D_pH\right|^2dx\right)^\frac{1}{2}.
\end{align*}Because of \eqref{conjugate}, an application of the H\"older's inequality concludes the proof.
\end{proof}

Next, we present the proof of Proposition \ref{logprop}.

\begin{proof}[Proof of Proposition \ref{logprop}]
H\"older inequality yields
\begin{equation}\label{holder}
\left\|\ln(m)\right\|_{L^{q(l+1)}(\Tt^d)}\,\leq\,C\left\|\ln(m)\right\|^\lambda_{L^1(\Tt^d)}\left\|\ln(m)^\frac{l+1}{2}\right\|^{\frac{2(1-\lambda)}{l+1}}_{L^{2^*}(\Tt^d)},
\end{equation}provided 
\begin{equation}\label{holdercond}
\frac{1}{q(l+1)}=\lambda+\frac{2(1-\lambda)}{2^*(l+1)}
\end{equation}is satisfied. By combining \eqref{holder} with Lemma \ref{loglemma01} and Corollary \ref{loglemma2}, we obtain the following inequality:
\begin{align*}
\left\|\ln(m)\right\|_{L^{q(l+1)}(\Tt^d)}\leq &C\left\|\ln(m)\right\|_{L^{l+1}(\Tt^d)}^{1-\lambda}+C\left\|D_pH\right\|_{L^{2p}(\Tt^d)}^\frac{2(1-\lambda)}{l+1}\left\|\ln(m)\right\|_{L^{q(l-1)}(\Tt^d)}^{\frac{(1-\lambda)(l-1)}{l+1}}.
\end{align*}Because $p,\,q>1$, successive application of the weighted Young's inequality leads to 
\begin{align*}
\left\|\ln(m)\right\|_{L^{q(l+1)}(\Tt^d)}&\leq C+C_l\left\|D_pH\right\|_{L^{2p}(\Tt^d)}.
\end{align*}Finally, Assumption A\ref{DpH} implies 
\begin{align}\label{eqlog}
\left\|\ln(m)\right\|_{L^{q(l+1)}(\Tt^d)}&\leq C+C_l\left\|Du\right\|_{L^{2p(\gamma-1)}(\Tt^d)}^{\gamma-1};
\end{align}the result follows from \eqref{eqlog} and A\ref{ggrow}.b.
\end{proof}

\section{Regularity for the Hamilton-Jacobi equation by the integral Bernstein method}\label{bernstein}

Here, we present an estimate for the Lipschitz regularity of solutions to the Hamilton-Jacobi equation in terms of norms of $g$ in appropriate Lebesgue spaces. This is based on the techniques introduced in \cite{Lions85}. We argue along the same lines as in \cite{cirant}, where the detailed proofs, in the presence of Neuman boundary data, are presented. Since the arguments in the periodic setting are very similar to those in \cite{cirant}, we omit the details in what follows.

Let $(u,m, \overline{H})$ be a solution to \eqref{mfg}. We start by setting $v\,\doteq\,\left|Du\right|^2$. Then,
\begin{align}\label{eq1}
-\Delta v \,&=\,-2\sum_{i,j=1}^d\left(D_{ij}u\right)^2\,-\,2\sum_{i=1}^dD_iuD_i\left(H(x,p)-\overline{H}-g\right).
\end{align}By multiplying \eqref{eq1} by $v^p$ and integrating over $\Tt^d$, one obtains
\begin{align}\label{eq2}
-\int_{\Tt^d}v^p\Delta v \,&+\,2\int_{\Tt^d}\left|D^2u\right|^2v^p\,=\,-\,2\int_{\Tt^d}Du\cdot D_xH(x,Du)v^p\,\\\nonumber&\quad-\,2\int_{\Tt^d}D_pH\cdot Dv\, v^p\,-\,2\int_{\Tt^d}Dg\cdot Du\,v^p.
\end{align}
The  following Lemmas establish bounds for each of the terms on the right-hand side of \eqref{eq2}.

\begin{Lemma}\label{lemma1}
Let $u\colon\Tt^d\to\Rr$ be a $C^2$ function and let $v=|Du|^2$ Then, there exist positive constants $c,\,C$, which do not depend on $u,$ such that for every $p>1$
$$-\int_{\Tt^d}v^p\Delta v\,\geq\, \frac{4pc}{(p+1)^2}\left[\left(\int_{\Tt^d}\left|v\right|^\frac{(p+1)d}{d-2}\right)^\frac{d-2}{d}-\left(\int_{\Tt^d}\left|v\right|^{p+\gamma}\right)^\frac{p+1}{p+\gamma}\right]$$
and
$$-2\int_{\Tt^d}Dg\cdot Du\,v^p\leq \frac{1}{2}\int_{\Tt^d}\left|D^2u\right|^2v^p+C\int_{\Tt^d}\left|g\right|^2v^p.$$
\end{Lemma}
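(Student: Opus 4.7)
Both inequalities are established by integration by parts on $\Tt^d$ followed by standard inequalities: Sobolev embedding combined with Jensen's inequality for the first bound, and Young's inequality for the second. The central observation for the first inequality is that $v^p|Dv|^2$ is proportional to $|Dv^{(p+1)/2}|^2$, which makes the Sobolev embedding $H^1\hookrightarrow L^{2^*}$ directly applicable to $w:=v^{(p+1)/2}$. For the second inequality, shifting one derivative from $g$ onto $Du\,v^p$ produces a Hessian-type term plus a term involving $Du\cdot Dv$, both of which are controlled pointwise by $|D^2u|\,v$.

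\textbf{First inequality.} Integration by parts yields
\begin{equation*}
-\int_{\Tt^d} v^p\,\Delta v\,dx \,=\, p\int_{\Tt^d} v^{p-1}|Dv|^2\,dx \,=\, \frac{4p}{(p+1)^2}\int_{\Tt^d}\left|Dv^{(p+1)/2}\right|^2dx.
\end{equation*}
Setting $w=v^{(p+1)/2}$ and using the Sobolev inequality on the torus, there exist constants $c,\,C>0$ with
\begin{equation*}
\left(\int_{\Tt^d} w^{2^*}\,dx\right)^{2/2^*} \,\leq\, \frac{1}{c}\int_{\Tt^d}|Dw|^2\,dx \,+\, C\int_{\Tt^d} w^2\,dx.
\end{equation*}
Since $\int w^2 = \int v^{p+1}$ and $\Tt^d$ has unit measure, Jensen's inequality applied to the convex function $t\mapsto t^{(p+\gamma)/(p+1)}$ (valid because $\gamma>1$) gives
\begin{equation*}
\int_{\Tt^d} v^{p+1}\,dx \,\leq\, \left(\int_{\Tt^d} v^{p+\gamma}\,dx\right)^{(p+1)/(p+\gamma)}.
\end{equation*}
Rewriting $\int w^{2^*} = \int v^{(p+1)d/(d-2)}$ and rearranging produces the stated inequality with an appropriately renamed constant $c$.

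\textbf{Second inequality.} Integrating by parts in the term containing $Dg$ and using $D(v^p)=pv^{p-1}Dv$,
\begin{equation*}
-2\int_{\Tt^d} Dg\cdot Du\,v^p\,dx \,=\, 2\int_{\Tt^d} g\,\Delta u\,v^p\,dx \,+\, 2p\int_{\Tt^d} g\,v^{p-1}\,Du\cdot Dv\,dx.
\end{equation*}
Since $v=|Du|^2$, one has $Du\cdot Dv = 2\,Du^{T}D^2u\,Du$, so $|Du\cdot Dv|\leq 2|D^2u|\,v$; moreover $|\Delta u|\leq\sqrt{d}\,|D^2u|$. Hence the right-hand side is dominated by $(2\sqrt{d}+4p)\int|g|\,|D^2u|\,v^p\,dx$. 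Splitting the integrand as $(v^{p/2}|D^2u|)\cdot(v^{p/2}|g|)$ and applying the weighted Young inequality $ab\leq\tfrac{\epsilon}{2}a^2+\tfrac{1}{2\epsilon}b^2$ with $\epsilon$ chosen so that the coefficient of $\int|D^2u|^2v^p$ equals $\tfrac12$, one obtains the claimed estimate; the resulting coefficient in front of $\int|g|^2v^p$ is an explicit constant (depending on $p$ and $d$).

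\textbf{Main obstacle.} The argument is essentially bookkeeping, with no analytic difficulty: the one subtlety is to match the prefactor $4pc/(p+1)^2$ exactly, which requires absorbing the Sobolev constant into $c$ and then using the Jensen step above so that the $L^2$ norm of $w$ is bounded by precisely the $L^{p+\gamma}$ quantity appearing on the right. Everything else is algebra.
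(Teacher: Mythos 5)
Your approach is the right one and is exactly what the cited source (Cirant's integral Bernstein lemma) does: integrate by parts to turn $-\int v^p\Delta v$ into $\tfrac{4p}{(p+1)^2}\int|Dv^{(p+1)/2}|^2$, then Sobolev on the torus, then Jensen/H\"older (since $|\Tt^d|=1$ and $\gamma>1$); and, for the $Dg$ term, integrate by parts once, bound $|\Delta u|$ and $|Du\cdot Dv|$ pointwise by $\sqrt d\,|D^2u|$ and $2|D^2u|v$ respectively, and close with Young's inequality. The paper itself gives no proof here, only a reference, so there is nothing in the paper to diverge from; your write-up is a correct reconstruction.

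One point deserves sharper treatment than the phrase ``appropriately renamed constant $c$'' suggests. The Sobolev inequality on $\Tt^d$ delivers
\[
\int_{\Tt^d}|Dw|^2\,dx\;\geq\;c_1\left(\int_{\Tt^d}w^{2^*}\right)^{2/2^*}-\,c_2\int_{\Tt^d}w^2\,dx,
\]
with $c_1$ and $c_2$ in general \emph{different} (typically $c_2>c_1$), and after the Jensen step one obtains
\[
-\int_{\Tt^d}v^p\Delta v\;\geq\;\frac{4p}{(p+1)^2}\left[c_1\left(\int_{\Tt^d}v^{\frac{(p+1)d}{d-2}}\right)^{\frac{d-2}{d}}-c_2\left(\int_{\Tt^d}v^{p+\gamma}\right)^{\frac{p+1}{p+\gamma}}\right].
\]
Collapsing $c_1,c_2$ into a single $c$ as in the statement is not a mere renaming: since the second term is \emph{subtracted}, making both coefficients equal to $\min(c_1,c_2)$ goes the wrong way on the negative term, and making them $\max(c_1,c_2)$ goes the wrong way on the positive one. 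So strictly speaking your proof establishes the two-constant version, not the literal single-$c$ statement. This is harmless for the paper: in the proof of Lemma~\ref{lemma5} the $\left(\int v^{p+\gamma}\right)^{(p+1)/(p+\gamma)}$ term is absorbed into a $C_p$ error term regardless of its coefficient, so nothing downstream breaks. You should, however, either prove the two-constant inequality and note that this is what is actually used, or add a remark that the single $c$ is an abuse of notation. Similarly, in the second inequality your constant in front of $\int|g|^2v^p$ comes out as $\tfrac{(2\sqrt d+4p)^2}{2}$, hence depends on $p$; you flag this, and again it is harmless because Lemma~\ref{lemma5} absorbs it into $C_p$, but it slightly contradicts the ``$\exists\,c,C$ $\forall p>1$'' phrasing of the statement, so it is worth a remark.
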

\begin{proof}
See \cite[Theorem 19, page 25]{cirant}.
\end{proof}

\begin{Lemma}\label{lemma3}
Let $u\colon\Tt^d\to\Rr$ be a $C^2$ function and let $v=|Du|^2$. Assume Assumption A\ref{a2} holds. 
Then, for all $\delta>0$ there exists a constant $C_{\delta}>0$, which does not depend on $u,$ such that
 $$\left|-2\int_{\Tt^d}Du\cdot D_xH(x,Du)v^p\right|\,\leq\,C_{\delta}+\,\delta\int_{\Tt^d}v^{p+\gamma},$$ for all  $p>1.$
\end{Lemma}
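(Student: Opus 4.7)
The plan is to first bound the integrand pointwise using the assumptions on $D_xH$, and then use Young's inequality to absorb lower-order powers of $v$ into the $\delta \int v^{p+\gamma}$ term.

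First, I would combine Assumption A\ref{a2}, which gives $|D_xH(x,p)| \leq C H(x,p) + C$, with the growth condition in Assumption A\ref{Hsub}.2, $H(x,p) \leq C + C|p|^\gamma$, to deduce that
\[
|D_xH(x,Du)| \leq C + C|Du|^\gamma = C + C v^{\gamma/2}.
\]
Then, by Cauchy--Schwarz applied to $Du \cdot D_xH(x,Du)$ together with the bound $|Du| = v^{1/2}$, I obtain the pointwise estimate
\[
|Du \cdot D_xH(x,Du)|\, v^p \,\leq\, C v^{p+\tfrac{1}{2}} + C v^{p + \tfrac{\gamma+1}{2}}.
\]

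Integrating over $\Tt^d$, the task reduces to controlling $\int v^{p+1/2}$ and $\int v^{p+(\gamma+1)/2}$ by $\delta \int v^{p+\gamma} + C_\delta$. Since $\gamma > 1$, both exponents satisfy $p+\tfrac{1}{2} < p+\gamma$ and $p+\tfrac{\gamma+1}{2} < p+\gamma$. So by Young's inequality applied pointwise (to $v^q \cdot 1$ with exponents $\tfrac{p+\gamma}{q}$ and its conjugate), for every $q<p+\gamma$ and every $\delta > 0$ there exists $C_\delta>0$ with
\[
v^{q}(x) \,\leq\, \delta\, v^{p+\gamma}(x) + C_\delta, \qquad \text{pointwise a.e.}
\]
Integrating over $\Tt^d$ and applying this to both exponents $q=p+\tfrac{1}{2}$ and $q=p+\tfrac{\gamma+1}{2}$ (absorbing the constant factors by renaming $\delta$), one obtains the claimed bound.

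There is no real obstacle in this argument — the key structural point is that the hypothesis $\gamma>1$ from A\ref{Hsub}.2 is exactly what guarantees that the exponent coming from $D_xH$ on $Du$, namely $p+(\gamma+1)/2$, is strictly smaller than the dominant $p+\gamma$ appearing on the right-hand side. If $\gamma = 1$, the argument would fail, and one would not be able to hide the term. Thus the whole proof is essentially the observation that the Hamiltonian's superlinear growth leaves a gap that can be filled by Young's inequality with an arbitrarily small coefficient.
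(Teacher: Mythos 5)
Your proof is correct and follows essentially the same route as the paper: the paper's (one-line) argument is precisely the pointwise bound $|Du\cdot D_xH(x,Du)v^p|\leq C(v^{p+\frac{\gamma+1}{2}}+v^{p+\frac12})\leq \delta v^{p+\gamma}+C_\delta$, which you reconstruct in detail via A\ref{a2}, A\ref{Hsub}.2, and Young's inequality. Your closing observation that $\gamma>1$ is exactly the gap that makes the absorption possible is also the right structural takeaway.
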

\begin{proof}
It is enough to check that 
\[\left|Du\cdot D_xH(x,Du)v^p\right|\leq C(v^{p+\frac {\gamma+1}{2}}+v^{p+\frac {1}{2}})\leq \delta v^{p+\gamma}+C_{\delta}.\]
\end{proof}

\begin{Lemma}\label{lemma4}
	Let $u\colon\Tt^d\to\Rr$ be a $C^2$ function and let $v=|Du|^2$. Assume Assumption A\ref{a2} holds. 
	Then, for all $\delta>0$ there exists a constant $C_{\delta}>0$, which does not depend on $u,$ such that $$2\int_{\Tt^d}D_pH\cdot Dv\,v^p\,\leq\,\delta\int_{\Tt^d}\left|D^2u\right|^2v^p\,+\,\frac{C_{\delta}}{p+1}\int_{\Tt^d}v^{p+\gamma}\,+\,\frac{C_\delta}{p+1},$$ for every $p>1$.
\end{Lemma}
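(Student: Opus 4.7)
The idea is to integrate by parts in order to produce the required $\frac{1}{p+1}$ prefactor, and then to bound the resulting divergence term using Assumption A\ref{a2} together with Young's inequality, tuned in $\delta$ and $p+1$.

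First I would use $v^p\,Dv = \tfrac{1}{p+1}D(v^{p+1})$ and the absence of boundary on $\Tt^d$ to integrate by parts,
\[
2\int_{\Tt^d}D_pH(x,Du)\cdot Dv\,v^p\,dx \;=\; -\frac{2}{p+1}\int_{\Tt^d}v^{p+1}\,\mathrm{div}_x\!\bigl(D_pH(x,Du(x))\bigr)\,dx,
\]
which already realizes the target prefactor. Next, by the chain rule,
\[
\mathrm{div}_x\!\bigl(D_pH(x,Du)\bigr) \;=\; \mathrm{tr}\,D_{xp}^2H(x,Du) \,+\, \mathrm{tr}\!\bigl(D_{pp}^2H(x,Du)\cdot D^2u\bigr),
\]
and Assumption A\ref{a2} yields the pointwise estimate
\[
\bigl|\mathrm{div}_x(D_pH(x,Du))\bigr| \;\leq\; C \,+\, Cv^{(\gamma-1)/2} \,+\, C\bigl(1+v^{(\gamma-2)/2}\bigr)|D^2u|.
\]

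Multiplying by $v^{p+1}$ produces purely polynomial terms in $v$, with exponents at most $p+\gamma$ (hence dominated by $v^{p+\gamma}+1$ because $\gamma>1$), and mixed terms of the form $v^{\sigma}|D^2u|$. For the dominant mixed contribution I would apply Young's inequality $ab\leq \tfrac{\eta}{2}a^2+\tfrac{1}{2\eta}b^2$ with $a=v^{p/2}|D^2u|$ and $b=v^{p/2+\gamma/2}$:
\[
v^{p+\gamma/2}|D^2u|\;\leq\;\tfrac{\eta}{2}\,v^p|D^2u|^2\,+\,\tfrac{1}{2\eta}\,v^{p+\gamma}.
\]
Choosing $\eta = c\,\delta\,(p+1)$ with a suitable small absolute constant $c$ makes the $\tfrac{2}{p+1}$ prefactor convert this term into $\delta\int v^p|D^2u|^2 + \tfrac{C_\delta}{p+1}\int v^{p+\gamma}$, matching exactly the target shape; gathering with the polynomial-in-$v$ terms yields the stated bound.

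\textbf{Main obstacle.} The delicate contribution is the mixed term $v^{p+1}|D^2u|$, originating from the additive $+C$ in the bound $|D_{pp}^2H|\leq C|p|^{\gamma-2}+C$, because the naive Young's inequality (with $a=v^{p/2}|D^2u|$) produces $v^{p+2}$, which is not dominated by $v^{p+\gamma}$ in the regime $\gamma<2$. I would resolve this by exploiting the effective bound $|D_{pp}^2H|\lesssim (1+|p|)^{\gamma-2}$ witnessed by the model Hamiltonian: on $\{|Du|\geq 1\}$ the $+C$ piece is absorbed into $Cv^{(\gamma-2)/2}$ when $\gamma\geq 2$ and is uniformly bounded when $\gamma<2$, in both cases reducing the problematic term to the already-treated $v^{p+\gamma/2}|D^2u|$ case via an adapted weighted Young's inequality, while on $\{|Du|<1\}$ the integrand is uniformly controlled and produces only a residual that Young's folds into $\delta\int|D^2u|^2+\tfrac{C_\delta}{p+1}$. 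Collecting all pieces gives the claimed estimate.
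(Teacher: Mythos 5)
Your argument is, step for step, the one in the paper: integrate by parts to extract the $\tfrac{1}{p+1}$ prefactor, expand $\div\!\left(D_pH(x,Du)\right)$ by the chain rule, bound the two resulting pieces via Assumption A\ref{a2}, and close with Young's inequality calibrated so the $\delta$-term sits on $\int v^p|D^2u|^2$ and the remainder is absorbed into $\tfrac{C_\delta}{p+1}\int v^{p+\gamma}+\tfrac{C_\delta}{p+1}$. Beyond matching the paper, you also correctly identify something the paper's displayed chain of inequalities passes over silently: the additive constant in $|D^2_{pp}H|\leq C|p|^{\gamma-2}+C$ produces an extra mixed term $v^{p+1}|D^2u|$ which the paper replaces directly by $Cv^{p+\gamma/2}|D^2u|$, i.e.\ the paper in effect uses the decaying bound $|D^2_{pp}H|\lesssim|p|^{\gamma-2}$ rather than the stated $|p|^{\gamma-2}+C$. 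For $\gamma\geq 2$ this makes no difference ($v^{p+1}\leq v^{p+\gamma/2}$ on $\{v\geq 1\}$, and the $\{v<1\}$ residual folds into $\delta\int v^p|D^2u|^2+\tfrac{C_\delta}{p+1}$ by your same Young step), so the stated A\ref{a2} suffices there; for $1<\gamma<2$ your diagnosis is sharp and one really does need the decaying form $(1+|p|)^{\gamma-2}$ witnessed by the model Hamiltonian. Two small wording fixes in your obstacle paragraph: for $\gamma<2$ on $\{v\geq 1\}$ it is precisely the decay $(1+|p|)^{\gamma-2}\leq v^{(\gamma-2)/2}$, not mere boundedness of the $+C$ piece, that reduces the term to the treated case $v^{p+\gamma/2}|D^2u|$ (boundedness alone leaves $v^{p+1}|D^2u|$ in place, hence $v^{p+2}$ after Young); and the $\{v<1\}$ residual should read $\delta\int v^p|D^2u|^2+\tfrac{C_\delta}{p+1}$, with the $v^p$ weight your choice $a=v^{p/2}|D^2u|$ in Young actually produces.
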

\begin{proof}
Integration by parts and Assumption A\ref{a2} yields:
\begin{align*}
&-2\int_{\Tt^d}v^pD_pH\cdot Dv\,=\frac 2 {p+1}\int_{\Tt^d}v^{p+1}\div\left(D_pH\right)\\&
\leq\,\frac C {p+1}\left[ \int_{\Tt^d}v^{p+1}\left|D^2_{xp}H\right|+ \int_{\Tt^d}v^{p+1}\left|D^2_{pp}H\right||D^2u|\right]\\&
\leq\,\frac C {p+1}\left[ \int_{\Tt^d}  (v^{p+\frac {\gamma+1}{2}}+v^{p+1})+ \int_{\Tt^d}v^{p+\frac {\gamma}{2}}|D^2u|\right]\\&
\leq\,\delta\int_{\Tt^d}\left|D^2u\right|^2v^p\,+\,\frac{C_{\delta}}{p+1}\int_{\Tt^d}v^{p+\gamma}\,+\,\frac{C_\delta}{p+1}.
\end{align*}
\end{proof}

\begin{Lemma}\label{lemma5}
Let $(u,m,\bar{H})$ be a $C^{2}$ soluition to \eqref{mfg} and $v=|Du|^2$. Suppose that Assumptions A\ref{Hsub}-A\ref{gpos} hold. Then, for any $p>1$ large enough, there exists $C_p>0$, that does not depend on $u,$ such that
\[
\left(\int_{\Tt^d}v^\frac{d(p+1)}{d-2}\right)^\frac{(d-2)}{d(p+1)}\leq C_p \left(\int_{\Tt^d}\left|g\right|^{2\beta_p} \right)^{\frac 1 {\beta_p}}+\,C_p,
\]
where $\beta_p$ is the conjugate power of $\frac{d(p+1)}{(d-2)p}$; thus $\beta_p\to \frac d 2$ when $p\to\infty.$
\end{Lemma}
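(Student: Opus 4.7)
The plan is to combine the identity \eqref{eq2} with the bounds in Lemmas \ref{lemma1}, \ref{lemma3}, \ref{lemma4}, absorb the Hessian contributions, and then extract a power of the Lebesgue norm of $g$ via H\"older's inequality with a conjugate exponent tuned so that the resulting power of $v$ matches the Sobolev exponent $r\doteq d(p+1)/(d-2)$. Concretely, I would first substitute the three lemmas into the identity \eqref{eq2}; for $\delta$ small enough, the Hessian terms $\int_{\Tt^d}|D^2u|^2 v^p$ appearing on the RHS from Lemma \ref{lemma1}(ii) and Lemma \ref{lemma4} are absorbed into the $2\int_{\Tt^d}|D^2u|^2 v^p$ on the LHS, yielding schematically
\[
\frac{4pc}{(p+1)^2}\|v\|_{L^r}^{p+1}\leq \frac{4pc}{(p+1)^2}\|v\|_{L^{p+\gamma}}^{p+1}+C_\delta\int_{\Tt^d}v^{p+\gamma}+C_\delta+C\int_{\Tt^d}|g|^2 v^p.
\]

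For the coupling integral I would apply H\"older with the conjugate pair $(\beta_p,\beta_p')$ chosen so that $p\beta_p'=r$, which forces $\beta_p=d(p+1)/(2p+d)\to d/2$, followed by Young's inequality with exponents $(p+1)$ and $(p+1)/p$:
\[
C\int_{\Tt^d}|g|^2 v^p\leq C\|g\|_{L^{2\beta_p}}^2 \|v\|_{L^r}^p\leq \epsilon\|v\|_{L^r}^{p+1}+C_\epsilon\left(\int_{\Tt^d}|g|^{2\beta_p}\right)^{(p+1)/\beta_p}.
\]
The two remaining $L^{p+\gamma}$-pieces of $v$ I would treat by interpolating between $L^{\gamma/2}$ and $L^r$, anchored at the a priori bound
\[
\int_{\Tt^d}v^{\gamma/2}\,dx=\int_{\Tt^d}|Du|^\gamma\,dx\leq C,
\]
which follows by integrating the HJ equation in \eqref{mfg} and combining A\ref{Hsub}(ii) with Corollary \ref{HDu}. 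Raising the interpolation inequality to the appropriate power and applying Young's inequality produces bounds of the form
\[
\|v\|_{L^{p+\gamma}}^{p+1},\ \int_{\Tt^d}v^{p+\gamma}\leq \epsilon\|v\|_{L^r}^{p+1}+C_{p,\epsilon}.
\]

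Collecting all the estimates and choosing $\epsilon$ small enough to absorb every appearance of $\|v\|_{L^r}^{p+1}$ into the LHS yields
\[
\|v\|_{L^r}^{p+1}\leq C_p+C_p\left(\int_{\Tt^d}|g|^{2\beta_p}\right)^{(p+1)/\beta_p},
\]
and extracting the $(p+1)$-th root via $(a+b)^{1/(p+1)}\leq a^{1/(p+1)}+b^{1/(p+1)}$ gives exactly the inequality in the statement. The expected main obstacle is the interpolation-Young step: the only a priori bound on $v$ sits at the low exponent $\gamma/2$, which is smaller than $1$ when $\gamma<2$, while the exponent of the troublesome integral $\int v^{p+\gamma}$ exceeds $p+1$. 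Tracking the interpolation parameter $\theta$ carefully as $p\to\infty$, so that the Young exponent for $\|v\|_{L^r}$ can be forced to be at most $p+1$ (leaving only a small multiple of $\|v\|_{L^r}^{p+1}$ absorbable into the LHS and a remainder controlled solely by the a priori bound), is the delicate point; it is precisely this constraint that dictates the ``$p$ large enough'' hypothesis in the statement.
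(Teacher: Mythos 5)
Your outline follows the paper through the combination of Lemmas \ref{lemma1}, \ref{lemma3}, \ref{lemma4} and the H\"older--Young step on $\int_{\Tt^d}|g|^2v^p$, but it diverges at the crucial point: how to control $\int_{\Tt^d}v^{p+\gamma}$. You propose to interpolate between $L^{\gamma/2}$ (anchored at $\int_{\Tt^d}|Du|^\gamma\leq C$) and $L^r$. That step does not close for the range of $\gamma$ allowed by the paper. Writing $\frac{1}{p+\gamma}=\frac{2\theta}{\gamma}+\frac{1-\theta}{r}$ and demanding $(1-\theta)(p+\gamma)<p+1$ so that Young can absorb the resulting $\|v\|_{L^r}$-power into the left-hand side reduces, after simplification and using $\frac{p+1}{r}=\frac{d-2}{d}$, to the condition
\[
\frac{2-\gamma}{\gamma}>\frac{d-2}{d}\quad\Longleftrightarrow\quad \gamma<\frac{d}{d-1}.
\]
This threshold is \emph{independent of $p$}: the ratio $\frac{p+1}{r}$ is constant, so taking $p$ larger does not relax the constraint. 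Thus for $\gamma\geq d/(d-1)$ (for instance $d=3$, $\gamma=2$, the purely quadratic Hamiltonian) the interpolation--Young step produces a power of $\|v\|_{L^r}$ strictly larger than $p+1$ that cannot be absorbed, and the argument breaks down. Since the lemma is asserted under A\ref{Hsub}--A\ref{gpos} alone, with no upper restriction on $\gamma$, this is a genuine gap, not a technicality deferred to ``$p$ large enough.''

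The missing idea is to use the Hamilton--Jacobi equation itself to turn the Hessian term on the left-hand side into the needed $\int_{\Tt^d}v^{p+\gamma}$. After the absorptions you already perform, the left-hand side retains a positive multiple of $\int_{\Tt^d}|D^2u|^2v^p$. From $\Delta u=g+\overline H-H$, Cauchy--Schwarz gives $|D^2u|^2\geq\frac1d|\Delta u|^2=\frac1d|H-g-\overline H|^2$, and then $(a-b-c)^2\geq\frac13a^2-b^2-c^2$, together with $H\geq C|Du|^\gamma-C$ from A\ref{Hsub} and the bound on $\overline H$ from Corollary \ref{HDu}, yields
\[
\int_{\Tt^d}|D^2u|^2v^p\geq c\int_{\Tt^d}v^{p+\gamma}-C\int_{\Tt^d}g^2v^p-C.
\]
This produces a $c\int_{\Tt^d}v^{p+\gamma}$ contribution on the left-hand side with a fixed constant $c>0$. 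The two $\int_{\Tt^d}v^{p+\gamma}$ terms on the right-hand side arrive with coefficients $\frac{C_\delta}{p+1}+\delta$ (Lemmas \ref{lemma3}, \ref{lemma4}) and, after Young on $\bigl(\int v^{p+\gamma}\bigr)^{(p+1)/(p+\gamma)}$, an arbitrarily small $\epsilon$; choosing $\delta,\epsilon$ small and then $p$ large makes their total coefficient smaller than $c$, so they cancel against the left-hand side. No interpolation against a low-exponent a priori bound is needed, and the estimate holds uniformly in $\gamma>1$.
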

\begin{proof}
Combining the estimates from Lemmas \ref{lemma1}, \ref{lemma3} and \ref{lemma4} one obtains
\begin{align}\label{ineq1}
C_p&\left(\int_{\Tt^d}v^\frac{d(p+1)}{d-2}\right)^\frac{d-2}{d}+\left[2-\left(\frac{1}{2}+\delta\right)\right]\int_{\Tt^d}\left|D^2u\right|^2v^p\leq\\\nonumber&C_p\left(\int_{\Tt^d}v^{p+\gamma}\right)^\frac{p+1}{p+\gamma}+C\int_{\Tt^d}\left|g\right|^2v^p+\left(\frac {C_\delta} {p+1}+\delta\right)\int_{\Tt^d}v^{p+\gamma}+C_{\delta}.
\end{align}

Using the first equation in \eqref{mfg}, Assumption A\ref{Hsub}, and the bounds on $\overline{H}$ from Corollary \ref{HDu}, we have
\begin{align}\label{ineq2}
&\int_{\Tt^d}\left|D^2u\right|^2v^p\geq \frac{1}{d}\int_{\Tt^d}\left|\Delta u\right|^2v^p=\frac{1}{d}\int_{\Tt^d}\left|H-g-\overline{H} \right|^2v^p\\\nonumber&\geq \frac{1}{3d}\int_{\Tt^d}H^2v^p -\frac{1}{d}\int_{\Tt^d}g^2v^p-\frac{1}{d}C \int v^p\\\nonumber&
\geq c\int_{\Tt^d}v^{p+\gamma}-C \int_{\Tt^d}g^2v^p-C,
\end{align}where the second inequality follows from $(a-b-c)^2\geq \frac 1 3 a^2 -b^2 -c^2.$

For small values of $\delta$ and $p$ large enough, inequalities \eqref{ineq1} and \eqref{ineq2} yield
\begin{align*}
\left(\int_{\Tt^d}v^\frac{d(p+1)}{d-2}\right)^\frac{d-2}{d}&\leq C_p\int_{\Tt^d}\left|g\right|^2v^p\quad+\,C_p\\&\leq
 C_p\left(\int_{\Tt^d}v^\frac{d(p+1)}{d-2}\right)^\frac{(d-2)p}{d(p+1)} \left(\int_{\Tt^d}\left|g\right|^{2\beta_p} \right)^{\frac 1 {\beta_p}}+\,C_p.
\end{align*}Hence,
\[
\left(\int_{\Tt^d}v^\frac{d(p+1)}{d-2}\right)^\frac{(d-2)}{d(p+1)}\leq C_p \left(\int_{\Tt^d}\left|g\right|^{2\beta_p} \right)^{\frac 1 {\beta_p}}+\,C_p.
\]
\end{proof}

We close this section with the proof of Proposition \ref{cor1}.

\begin{proof}[Proof of Proposition \ref{cor1}] The result follows easily from Lemma \ref{lemma5}.
\end{proof}

\section{Proofs of Theorem \ref{teo1} and Theorem \ref{teo2}}\label{proofmt}

In the sequel, we detail the proof of Theorem \ref{teo1}. This is done by combining the results in Proposition \ref{munif} with Proposition \ref{cor1}.

\begin{proof}[Proof of Theorem \ref{teo1}]
Let $p\in(1,1+\alpha)$ and notice that $q\doteq 2(\gamma-1)p'\to\infty$ as $p\to 1$. Proposition \ref{cor1} leads to
\[
\left\|Du\right\|_{L^{2(\gamma-1)p'}(\Tt^d)}\,\leq\,C_q\,+\,C_q\left\|g\right\|_{L^{r_q}(\Tt^d)}\leq C_q\,+\,C_q\left\|m\right\|_{L^{\alpha r_q}(\Tt^d)},
\]
where we also have used Assumption A\ref{ggrow}. On the other hand, from Lemma \ref{munif} and A\ref{DpH}
\[
\|m\|_{L^{\alpha r_q}(\Tt^d)}\leq C_p \left(1+\left\|Du\right\|_{L^{2(\gamma-1)p'}(\Tt^d)}\right)^{\frac{2(\gamma-1)(1-\frac {p} {\alpha r_q})}{1-\frac {2p} {2^*}} }.
\]
Combining both inequalities, we get
\[
\left\|Du\right\|_{L^{2(\gamma-1)p'}(\Tt^d)}\,\leq C_p \left(1+\left\|Du\right\|_{L^{2(\gamma-1)p'}(\Tt^d)}\right)^{\frac{\alpha(\gamma-1)(1-\frac {p} {\alpha r_q})}{1-\frac {2p} {2^*}} },\]
for all $r>d$ and $p\in (1,1+\alpha)$.
By taking $p\to 1$ we get $q\to\infty$ and $r_q\to d$; hence the exponent in the previous inequality is such that
$$\frac{2\alpha(\gamma-1)\left(1-\frac{p}{\alpha r_q}\right)}{1-\frac{2p}{2^*}}\to d(\gamma-1)\alpha-(\gamma-1),$$
which is strictly less than one because of Assumption A\ref{alpha}. We then conclude that $
\|Du\|_{L^q(\Tt^d)}\leq C_q,
$ for large enough $q.$
Lemma \ref{munif} and Assumption A\ref{DpH} then yield
\[
\|m\|_{L^{\infty}(\Tt^d)}\leq C_q \left(1+\left\||D_pH|^2\right\|_{L^{q}(\Tt^d)}\right)^{\frac{1}{1-\frac {2q'} {2^*}} }\leq C_q
\]
for any $q>1.$
\end{proof}

Next, we combine the estimates in Proposition \ref{logprop} with the a priori regularity for the Hamilton-Jacobi equation obtained in Proposition \ref{cor1} to establish Theorem \ref{teo2}.

\begin{proof}[Proof of Theorem \ref{teo2}]
Lemma \ref{lemma5} yields
\begin{equation*}
\left\|Du\right\|_{L^\frac{2d(r+1)}{d-2}(\Tt^d)}\,\leq\,C_r\,+\,C_r\left\|g(m)\right\|_{L^\frac{2d(r+1)}{d+2r}(\Tt^d)}.
\end{equation*}
On the other hand, Proposition \ref{logprop} implies
\[
\left\|g\right\|_{L^{q(l+1)}(\Tt^d)}\leq C+C_l\left\|Du\right\|^{\gamma-1}_{L^{2p(\gamma-1)}(\Tt^d)}.
\]By choosing $r$ large enough and noticing that $$\frac{2d(r+1)}{d+2r}<d,$$ we have
\[
\left\|g\right\|_{L^{q(l+1)}(\Tt^d)}\leq C+C_l\left\|Du\right\|_{L^\frac{2d(r+1)}{d-2}(\Tt^d)}^{\gamma-1}\,\leq\,C_r\,+\,C_r\left\|g(m)\right\|_{L^d(\Tt^d)}^{\gamma-1}.
\]
Since we can always choose $l$ so that $d<q(l+1)$, we get
\[
\left\|g\right\|_{L^{d}(\Tt^d)}\leq C\left\|g\right\|_{L^{1}(\Tt^d)}^\theta\left\|g\right\|_{L^{q(l+1)}(\Tt^d)}^{1-\theta},
\]for $\theta$ determined by 
\[
\frac{1}{d}=\theta+\frac{1-\theta}{q(l+1)}.
\]

By gathering these computations and using Lemma \ref{mlnm}, we obtain:
\[
\left\|g\right\|_{L^{q(l+1)}(\Tt^d)}\leq C_l\left(1+\left\|g\right\|_{L^{q(l+1)}(\Tt^d)}^{(\gamma-1)(1-\theta)}\right).
\]Assumption A\ref{gamma} implies that $(\gamma-1)(1-\theta)<1$, for large enough $l$, which then leads to 
\[
\left\|g\right\|_{L^a(\Tt^d)}\leq C_a,
\]for $a>d$.
By using this bound in Lemma \ref{lemma5}, we conclude that
\[
\left\|Du\right\|_{L^b(\Tt^d)}\leq C_b.
\]
\end{proof}

Additional arguments build upon Theorems \ref{teo1} and \ref{teo2} to yield improved regularity for $u$ and $m$.

\begin{Lemma}\label{mbelow}
Assume Assumptions A\ref{Hsub}-A\ref{gpos} hold. Suppose that either Assumptions A\ref{ggrow}.a and A\ref{alpha} or A\ref{ggrow} and A\ref{gamma}.b are satisfied. Then, there exists a constant $C>0,$ such that for any $C^{\infty}$ solution $(u, m, \overline{H})$ to \eqref{mfg} we have $\left\|\ln m\right\|_{L^{\infty}(\Tt^d)}, \left\|D\ln m\right\|_{L^{\infty}(\Tt^d)}\leq C.$ In particular, there exists $m_0>0,$ so that $m\geq m_0.$
\end{Lemma}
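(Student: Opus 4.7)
My plan is to leverage the Hamilton--Jacobi equation to promote the $L^p$ bounds on $Du$ coming from Theorems \ref{teo1} and \ref{teo2} to an $L^\infty$ bound, and then to apply classical elliptic theory (Moser iteration together with a Harnack inequality) to the Fokker--Planck equation in order to obtain positivity of $m$ and hence uniform control on $\ln m$ and $D\ln m$.

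As a preliminary step, I would promote the integrability of $Du$ to every $L^p(\Tt^d)$. In case A\ref{ggrow}.a this is already contained in Theorem \ref{teo1}. In case A\ref{ggrow}.b, starting from the base estimates $\|g(m)\|_{L^a},\|Du\|_{L^b}\leq C$ for some $a,b>d$ supplied by Theorem \ref{teo2}, I would alternate Proposition \ref{cor1} (or directly Lemma \ref{lemma5}) with Proposition \ref{logprop} to push the exponents up step by step, until $\|Du\|_{L^p}\leq C_p$ and $\|g(m)\|_{L^p}\leq C_p$ hold for every finite $p$. Assumption A\ref{DpH} then yields $D_pH(x,Du)\in L^p(\Tt^d)$ for every $p$.

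Next, I would read the Hamilton--Jacobi equation as $\Delta u=g(m)-H(x,Du)+\overline{H}$. Assumption A\ref{Hsub} controls $|H(x,Du)|$ pointwise by $1+|Du|^\gamma$, which by the previous step belongs to every $L^p$; the coupling $g(m)$ likewise lies in every $L^p$ (it is bounded in case (a) and bootstrapped in case (b)). Calder\'on--Zygmund theory on $\Tt^d$ therefore gives $u\in W^{2,p}(\Tt^d)$ for every $p$, so by Morrey's embedding $Du\in L^\infty(\Tt^d)\cap C^{0,\alpha}(\Tt^d)$ for some $\alpha>0$. Combined with the chain rule and A\ref{a2}, this shows that the composition $D_pH(\cdot,Du(\cdot))$ belongs to $L^\infty(\Tt^d)\cap C^{0,\alpha}(\Tt^d)$, with a uniform bound across solutions.

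With a bounded drift in hand, I would rewrite the Fokker--Planck equation in divergence form as $\div(Dm-D_pH(x,Du)m)=0$. In case (b) I would first run Moser iteration to deduce $m\in L^\infty$; in case (a) this is already given by Theorem \ref{teo1}. The De Giorgi--Nash--Moser theory then supplies Harnack's inequality on every ball, and combining this with the mass constraint $\int_{\Tt^d}m\,dx=1$ (which forces $m\geq 1$ at some point) and a finite covering of $\Tt^d$ by overlapping balls yields a uniform lower bound $m\geq m_0>0$, and in particular $\|\ln m\|_{L^\infty}\leq C$. A concluding application of Schauder estimates to the same Fokker--Planck equation, now using $D_pH(\cdot,Du(\cdot))\in C^{0,\alpha}$, gives $m\in C^{2,\alpha}(\Tt^d)$, so $Dm\in L^\infty$ and the identity $D\ln m=Dm/m$ yields the desired bound on $\|D\ln m\|_{L^\infty}$. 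The main obstacle I anticipate is the uniformization of the Harnack step: I must verify that the Harnack constant depends only on $\|D_pH(\cdot,Du(\cdot))\|_{L^\infty}$ (and on $d$), which is itself bounded independently of the particular solution by the earlier steps, and that the chain of balls propagating the local lower bound to a global one can also be chosen uniformly. Once this uniformity is in place, the remaining regularity arguments reduce to routine elliptic bootstrapping.
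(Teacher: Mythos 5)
Your proposal is correct but takes a genuinely different route from the paper at the crucial lower-bound step. Both arguments open the same way: elliptic regularity in the Hamilton--Jacobi equation, fed by Theorems \ref{teo1}--\ref{teo2} and the growth hypotheses on $H$, gives $u\in W^{2,q}(\Tt^d)$ for every $q$, hence $Du\in C^{0,\alpha}(\Tt^d)\cap L^\infty(\Tt^d)$ and $D_pH(\cdot,Du(\cdot))\in C^{0,\alpha}(\Tt^d)$ with uniform bounds. From there the paper passes to the logarithmic variable $w=\ln m$, which satisfies the viscous Hamilton--Jacobi equation $\Delta w+|Dw|^2-D_pH\cdot Dw-\div(D_pH)=0$, and invokes nonlinear adjoint techniques (citing \cite{GPatVrt}, Proposition 6.9) to obtain Lipschitz bounds on $w$ directly, i.e.\ $\|D\ln m\|_{L^\infty}\leq C$; combined with the $L^1$ control of $\ln m$ from Corollary \ref{mlnm} this gives $\|\ln m\|_{L^\infty}\leq C$ as well. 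You instead stay on the level of $m$ itself, view $\div(Dm-D_pH\,m)=0$ as a divergence-form equation with drift bounded in $L^\infty$ (in fact H\"older), and run Moser iteration for the upper bound, the De Giorgi--Nash--Moser Harnack inequality plus the normalization $\int_{\Tt^d}m=1$ and a covering argument for the lower bound $m\geq m_0$, and finally Schauder (or $W^{2,p}$) estimates together with $D\ln m=Dm/m$ for the gradient bound. Your worry about uniformity of the Harnack constant is legitimate but easily dispatched: once $\|D_pH(\cdot,Du)\|_{L^\infty}$ is bounded independently of the solution, the Harnack constant and the length of the chain of balls covering $\Tt^d$ depend only on $d$ and that bound. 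The trade-off is that your route is more elementary and modular, relying only on the standard elliptic toolbox, whereas the paper's adjoint argument delivers the Lipschitz estimate for $\ln m$ in one stroke and is better adapted to situations where Harnack constants would not be as readily uniform (e.g.\ degenerate or time-dependent variants). Both approaches rest on the same preparatory regularity for $Du$, and both correctly yield the claimed bounds on $\|\ln m\|_{L^\infty}$ and $\|D\ln m\|_{L^\infty}$.
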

\begin{proof}
Using elliptic regularity in the first equation of \eqref{mfg} and Theorems \ref{teo1} and \ref{teo2} we conclude that $\|u\|_{W^{2,q}(\Tt^d)}\leq C_q,$ for every $q>1.$ In particular, by Morrey embedding Theorem, we have also $L^{\infty}$ bounds for $u.$  Further, note that the variable $w=\ln m$ satisfies the following equation
\[
\Delta w +|Dw|^2-D_pH(x,Du)\cdot Dw-\div(D_pH(x,Du))=0.
\]
The result follows from the above-stated regularity of $Du$ and nonlinear adjoint techniques (cf. Proposition 6.9 \cite{GPatVrt}.)
\end{proof}
	
\begin{teo}\label{teom}
Suppose Assumptions A\ref{Hsub}-A\ref{gpos} hold. Moreover, assume that either Assumptions A\ref{ggrow}.a and A\ref{alpha} or A\ref{ggrow} and A\ref{gamma}.b are satisfied. Then, for any $k\geq 1,q>1,$ there exist a constant $C_{k,q}>0$, such that for any $C^{\infty}$ solution $(u, m, \overline{H})$ to \eqref{mfg} we have $$\left\|D^ku\right\|_{L^q(\Tt^d)}, \left\|D^km\right\|_{L^q(\Tt^d)}\leq C_{k,q}.$$
\end{teo}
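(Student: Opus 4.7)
My approach would be a bootstrap argument alternating elliptic regularity between the two equations in \eqref{mfg}. The baseline is supplied by Theorems \ref{teo1}--\ref{teo2} and Lemma \ref{mbelow} (whose proof already gives $\|u\|_{W^{2,q}(\Tt^d)}\leq C_q$ for every $q>1$): thus I may start from $u \in W^{2,q}(\Tt^d)$ for every $q>1$, $\|Du\|_{L^\infty(\Tt^d)}\leq C$, $m \geq m_0 > 0$, and $\|\ln m\|_{W^{1,\infty}(\Tt^d)}\leq C$. In particular $m \in W^{1,\infty}(\Tt^d)$ and $g(m) \in L^\infty(\Tt^d)$ in both the polynomial and logarithmic cases.

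The first step would be to upgrade $m$. Writing the Fokker-Planck equation in non-divergence form
\[
\Delta m \,-\, D_pH(x,Du)\cdot Dm \,-\, \bigl[(D_{xp}H)(x,Du) + (D_{pp}H)(x,Du):D^2u\bigr]\,m \,=\, 0,
\]
Assumptions A\ref{DpH} and A\ref{a2}, together with $Du \in L^\infty$ and $D^2u \in L^q$ for every $q$, make the drift bounded and the zero-order coefficient of class $L^q$ for every $q$. Since $m \in L^\infty$ as well, $\Delta m \in L^q$ for every $q$, and $L^p$-elliptic regularity yields $m \in W^{2,q}(\Tt^d)$ for every $q>1$. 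Hence both $u$ and $m$ lie in $C^{1,\alpha}(\Tt^d)$ for every $\alpha \in (0,1)$.

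From here the argument is a routine induction on $k$. Assume $u, m \in C^{k,\alpha}(\Tt^d)$ for some $k\geq 1$. Since $m\geq m_0 > 0$ (crucial in the logarithmic case, where otherwise $\ln m$ would be singular), the smoothness of $g$ gives $g(m) \in C^{k,\alpha}$, and the smoothness of $H$ together with $Du \in C^{k-1,\alpha}$ gives $H(x,Du) \in C^{k-1,\alpha}$, so Schauder estimates applied to $\Delta u = g(m) + \overline H - H(x,Du)$ yield $u \in C^{k+1,\alpha}$. Plugging this back into the linear equation above for $m$, whose coefficients are now $C^{k-1,\alpha}$, Schauder theory upgrades $m$ to $C^{k+1,\alpha}$. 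Since $H$ and $g$ are $C^\infty$, the induction runs for every $k\geq 1$, and combined with the embedding $C^{k,\alpha}(\Tt^d) \hookrightarrow W^{k,q}(\Tt^d)$ this yields the desired bounds for all $k\geq 1$ and $q>1$. The only delicate point is tracking the loss of one derivative in the composition $x\mapsto H(x,Du(x))$, but elliptic regularity returns two derivatives per cycle, producing a net gain of one derivative per iteration; no genuine obstacle arises beyond the a priori estimates already in hand.
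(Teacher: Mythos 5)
Your argument is correct and is precisely the "standard bootstrap arguments" that the paper invokes without spelling out: the baseline bounds from Theorems \ref{teo1}--\ref{teo2} and Lemma \ref{mbelow} are used exactly as in the paper, and the alternating elliptic/Schauder iteration between the two equations is the intended route. The only thing the paper states explicitly in the proof of Lemma \ref{mbelow} and then compresses here is the $W^{2,q}$ bound on $u$ and the lower bound on $m$ (the latter indeed being the key point that makes $g(m)$ smooth in both the polynomial and logarithmic cases), which you have correctly identified as the starting point of the induction.
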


\begin{proof}
	The proof follows from the Theorems \ref{teo1},\ref{teo2} and Lemma \ref{mbelow} and standard bootstrap arguments.
\end{proof}

\appendix
\section{Existence by continuation method}
\label{exist}

To prove the existence of smooth solutions of \eqref{mfg}, we apply the continuation method, as in \cite{GPatVrt}. Since the proof there follows the same lines, here we only sketch the proof for our setting.

We consider a parametrized family of Hamiltonians:
\[
H_{\lambda}(x,p)=\lambda H(x,p)+(1-\lambda)(1+|p|^2)^{\gamma/2},\quad 0\leq \lambda\leq 1,
\]
and the corresponding system of PDE's:
\begin{equation}
	\label{mainl}
	\begin{cases}
		\Delta m_{\lambda}-\div(D_pH_{\lambda}(x,Du_{\lambda})m_{\lambda})=0\\
		\Delta u_{\lambda}+H_{\lambda}(x,Du_{\lambda})=\overline{H}_{\lambda}+g(m_{\lambda})\\
		\int_{\mathbb{T}^d}u_{\lambda}=0\\
		\int_{\mathbb{T}^d}m_{\lambda}=1.
	\end{cases}
\end{equation}
Note that for $\lambda =1$ this is exactly \eqref{mfg}.
We introduce the notation used in \cite{GPatVrt}.
\[
\dot{H}^k(\mathbb{T}^d,\Rr)=\left\{\,f\in H^k(\mathbb{T}^d,\Rr)|\int_{\mathbb{T}^d}f=0\,\right\}.
\]
Consider the Hilbert space $F^k=\dot{H}^k(\mathbb{T}^d,\Rr)\times H^k(\mathbb{T}^d,\Rr)\times L^2(\mathbb{T}^d,\Rr^d)\times\Rr$ with the norm
\[
\|w\|^2_{F^k}=\|\psi\|^2_{\dot{H}^k(\mathbb{T}^d,\Rr)}+\|f\|^2_{H^k(\mathbb{T}^d,\Rr)}+\|W\|^2_{L^2(\mathbb{T}^d,\Rr^d)}+|h|^2,
\]
for $w=(\psi,f,W,h)\in F^k.$  For $k$ large enough Sobolev's embedding theorem allows one to define the space $H^k_+(\mathbb{T}^d,\Rr)$ of positive functions in $H^k(\mathbb{T}^d,\Rr).$

Let
\[
F^k_+=\dot{H}^k(\mathbb{T}^d,\Rr)\times H^k_+(\mathbb{T}^d,\Rr)\times\Rr,
\]
by a classical solution to \eqref{mainl} we mean a tuple
$(u_{\lambda},m_{\lambda}, \overline{H}_{\lambda})\in\bigcap\limits_k F^k_+$.

\begin{proof}[Proof of Theorem \ref{maintheorem}]
	For big enough $k$ we can define
	$
	E\colon\Rr\times F^k_+\to F^{k-2}
	$
	by
	\[
	E(\lambda,u,m,\overline{H})=\left(
	\begin{array}{c}
	\Delta m-\div(D_pH_{\lambda}(x,Du)m)\\
	-\Delta u-H_{\lambda}(x,Du)+\overline{H}+g(m)\\
	-\int_{\mathbb{T}^d}m+1
	\end{array}
	\right).
	\]
	Then, \eqref{mainl} can be written as $E(\lambda,v_{\lambda})=0,$ where $v_{\lambda}=(u_{\lambda}, m_{\lambda}, \overline{H}_{\lambda}).$
	The partial derivative of $E$ in the second variable at the point $v_{\lambda}=(u_{\lambda}, m_{\lambda}, \overline{H}_{\lambda})$
	\[
	\mathcal{L}_{\lambda}:=D_2E(\lambda,v_{\lambda})\colon F^k\to F^{k-2},
	\]
	is given by
	\[
	\mathcal{L}_{\lambda}(w)(x)=\left(
	\begin{array}{c}
	\Delta f(x)-\div(D_pH_{\lambda}(x,Du_{\lambda})f(x)+m_{\lambda}D^2_{pp}H(x,Du_{\lambda}) D\psi)\\
	-\Delta \psi(x)-D_{p}H(x,Du_{\lambda}) D\psi+g'(m_{\lambda}(x))f(x)+h\\
	-\int_{\mathbb{T}^d}f
	\end{array}
	\right),
	\]
	where $w=(\psi,f,W,h)\in F^k$. Note that $\mathcal{L}_{\lambda}$ is well defined for any $k>1.$
	
	We define the set
	\[
	\Lambda=\{\,\lambda|\quad 0\leq\lambda\leq 1,\,\eqref{mainl} \text{ has a classical solution } (u_{\lambda},m_{\lambda}, \Hh_{\lambda})\,\}.
	\]
	
	Note that $0\in\Lambda$, as $(u_0,m_0,\Hh_0)\equiv(0,1,-g(1))$ is a solution to \eqref{mainl} for $\lambda=0.$ Our purpose is to prove $\Lambda=[0,1].$
	The bounds from Theorem \ref{teom} build upon the Sobolev's embedding Theorem, and Arzel\`a-Ascoli Theorem, to yield that $\Lambda$ is a closed set.
	To prove that $\Lambda$ is open, we need to prove that $\mathcal{L}_{\lambda}$ is invertible in order to use an implicit function theorem. For this, let $F=F^1$. For $w_1,w_2 \in F$ with smooth components, we can define
	\[
	B_{\lambda}[w_1,w_2]=\int_{\mathbb{T}^d} w_2\cdot \mathcal{L}_{\lambda}(w_1).
	\]
	Using integration by parts, we have for $w_1,w_2$ smooth,
	\begin{equation}
		\begin{split}
			B_{\lambda}[w_1,w_2]=\int_{\mathbb{T}^d}
			[&m_{\lambda}D\psi_1\cdot D^2_{pp}H \cdot D\psi_2+f_1D_pH_{\lambda}D\psi_2-f_2D_pH_{\lambda}D\psi_1\\&+g'(m_{\lambda})f_1f_2+D\psi_1Df_2-Df_1D\psi_2
			+h_1f_2 -h_2f_1].
		\end{split}
	\end{equation}
	This last expression is well defined on $F\times F.$ Thus it defines a bilinear form $B_{\lambda}\colon F\times F\to \Rr$.
	\begin{clm}
		$B$ is bounded $|B_{\lambda}[w_1,w_2]|\leq C\|w_1\|_F\|w_2\|_F$.
	\end{clm}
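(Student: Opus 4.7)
The plan is to bound each of the seven terms in the integral expression for $B_\lambda[w_1,w_2]$ separately and show that each is controlled by $\|w_1\|_F \|w_2\|_F$ via Cauchy--Schwarz, using that the coefficient functions built from the fixed solution $(u_\lambda,m_\lambda,\overline H_\lambda)$ are uniformly bounded in $L^\infty(\Tt^d)$. The $L^\infty$ bounds for these coefficients are the key input, and they are supplied by the a priori regularity already established: Theorem \ref{teom} combined with Morrey's embedding yields $\|Du_\lambda\|_{L^\infty(\Tt^d)}\le C$ and $\|m_\lambda\|_{L^\infty(\Tt^d)}\le C$, while Lemma \ref{mbelow} gives $m_\lambda\ge m_0>0$.

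Concretely, first I would note that since $Du_\lambda$ is uniformly bounded, Assumptions A\ref{DpH} and A\ref{a2} give
\[
\|D_pH_\lambda(\cdot,Du_\lambda)\|_{L^\infty(\Tt^d)}+\|D^2_{pp}H_\lambda(\cdot,Du_\lambda)\|_{L^\infty(\Tt^d)}\le C,
\]
and since $m_\lambda$ is bounded above and below and $g$ is smooth, $\|g'(m_\lambda)\|_{L^\infty(\Tt^d)}\le C$. Then each term in
\[
\begin{aligned}
B_\lambda[w_1,w_2]=\int_{\Tt^d}\bigl[
&m_\lambda D\psi_1\cdot D^2_{pp}H\cdot D\psi_2 + f_1 D_pH_\lambda\cdot D\psi_2 - f_2 D_pH_\lambda\cdot D\psi_1 \\
&+ g'(m_\lambda)f_1 f_2 + D\psi_1\cdot Df_2 - Df_1\cdot D\psi_2 + h_1 f_2 - h_2 f_1\bigr]
\end{aligned}
\]
admits the following control: the first term by $C\|D\psi_1\|_{L^2}\|D\psi_2\|_{L^2}$; the second and third by $C\|f_i\|_{L^2}\|D\psi_j\|_{L^2}$; the fourth by $C\|f_1\|_{L^2}\|f_2\|_{L^2}$; the fifth and sixth directly by $\|D\psi_i\|_{L^2}\|Df_j\|_{L^2}$; the last two by $|h_i|\|f_j\|_{L^1}\le C|h_i|\|f_j\|_{L^2}$ using finiteness of $|\Tt^d|$. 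Each of these is bounded by $C\|w_1\|_F\|w_2\|_F$ by the definition of the norm on $F=F^1$.

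Once the estimate is proved for smooth $w_1,w_2$, density of smooth tuples in $F$ and continuity of both sides in the $F$-norm extend it to all $w_1,w_2\in F$. The main (and only) potentially delicate point is that the $L^\infty$ bound on $Du_\lambda$ is genuinely needed so that the nonlinear coefficients $D_pH_\lambda$ and $D^2_{pp}H_\lambda$ are uniformly bounded; in the absence of such a bound the term $m_\lambda D\psi_1\cdot D^2_{pp}H\cdot D\psi_2$ would only be controllable in a weaker space. Fortunately this $L^\infty$ bound is exactly what Theorem \ref{teom} provides, so the claim reduces to a routine application of Cauchy--Schwarz.
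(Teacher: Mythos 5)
Your argument is correct and is exactly what the paper does, which it compresses into the single line ``We use the H\"older's inequality on each summand''; you have simply written out the seven (eight) Cauchy--Schwarz estimates explicitly. One minor simplification worth noting: for the fixed $\lambda\in\Lambda$ under consideration, $(u_\lambda,m_\lambda,\overline{H}_\lambda)$ is already a smooth classical solution on the compact torus, so the $L^\infty$ bounds on $Du_\lambda$, $m_\lambda$, $g'(m_\lambda)$ and hence on $D_pH_\lambda(\cdot,Du_\lambda)$ and $D^2_{pp}H_\lambda(\cdot,Du_\lambda)$ hold automatically --- invoking the uniform estimates of Theorem~\ref{teom} and Lemma~\ref{mbelow} is not needed here, since the constant $C$ in the claim is permitted to depend on the solution $v_\lambda$ (it is only used to show that $\mathcal{L}_\lambda$ is a bounded operator at that one point).
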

	We use the Holder's inequality on each summand.
	\begin{clm}
		There exists a linear bounded mapping $A\colon F\to F$ such that $B_{\lambda}[w_1,w_2]=(Aw_1,w_2)_F$.
	\end{clm}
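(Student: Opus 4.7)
The plan is a direct application of the Riesz representation theorem, leveraging the boundedness of $B_\lambda$ from the preceding claim. First I would fix $w_1\in F$ and consider the map $\ell_{w_1}\colon F\to\Rr$ given by $\ell_{w_1}(w_2)\doteq B_\lambda[w_1,w_2]$. Bilinearity of $B_\lambda$ makes this map linear in $w_2$, and the previous claim gives $|\ell_{w_1}(w_2)|\leq C\|w_1\|_F\|w_2\|_F$, so $\ell_{w_1}$ is a bounded linear functional on $F$ with operator norm at most $C\|w_1\|_F$.

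I would then invoke the Riesz representation theorem on the Hilbert space $F=F^1$: for each $w_1$, there exists a unique element $Aw_1\in F$ such that
\[
B_\lambda[w_1,w_2]=(Aw_1,w_2)_F\quad\text{for every }w_2\in F,
\]
with $\|Aw_1\|_F\leq C\|w_1\|_F$. This simultaneously defines the operator $A\colon F\to F$ and yields the claimed boundedness estimate.

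It remains to verify that $A$ is linear. For $w_1,w_1'\in F$, $\alpha,\beta\in\Rr$, and arbitrary $w_2\in F$, bilinearity of $B_\lambda$ in the first slot gives
\[
(A(\alpha w_1+\beta w_1'),w_2)_F=B_\lambda[\alpha w_1+\beta w_1',w_2]=\alpha B_\lambda[w_1,w_2]+\beta B_\lambda[w_1',w_2]=(\alpha Aw_1+\beta Aw_1',w_2)_F.
\]
Since $w_2\in F$ was arbitrary, nondegeneracy of the inner product forces $A(\alpha w_1+\beta w_1')=\alpha Aw_1+\beta Aw_1'$, so $A$ is linear.

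There is essentially no obstacle in carrying this out: once the previous claim is granted, the argument is a textbook application of Riesz representation. The only subtle point worth noting is that the explicit integral formula for $B_\lambda$ was introduced for smooth $w_1,w_2$; the preceding boundedness estimate is precisely what allows $B_\lambda$ to be extended uniquely to a continuous bilinear form on all of $F\times F$, ensuring that the construction of $A$ above makes sense on the full Hilbert space.
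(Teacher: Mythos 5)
Your argument is correct and is exactly the paper's approach: the paper simply states that the claim follows from Claim 1 (boundedness of $B_\lambda$) together with the Riesz Representation Theorem, and you have spelled out the standard details of that argument. Nothing is missing.
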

	
	This follows from Claim 1 and the Riesz Representation Theorem.
	
	\begin{clm}
		There exists a positive constant $c$ such that $\|Aw\|_F\geq c\|w\|_F$ for all $w\in F.$
	\end{clm}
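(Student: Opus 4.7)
The plan is to argue by contradiction, extracting the norm $\|w\|_F$ piece by piece from the bilinear form via judicious choices of a test element $w_2$. Suppose the claim fails, so that there is a sequence $w_n=(\psi_n,f_n,h_n)\in F$ with $\|w_n\|_F=1$ and $\|A w_n\|_F\to 0$; by the Riesz identity $(Aw_1,w_2)_F=B_\lambda[w_1,w_2]$ this means $B_\lambda[w_n,w_2]\to 0$ uniformly over the unit ball of $F$. The key first step is to test on the diagonal: plugging $w_2=w_n$, the three antisymmetric pairings in the formula for $B_\lambda$ (namely $f_1 D_pH\cdot D\psi_2-f_2 D_pH\cdot D\psi_1$, $D\psi_1\cdot Df_2-Df_1\cdot D\psi_2$, and $h_1f_2-h_2f_1$) cancel identically, leaving
\[
B_\lambda[w_n,w_n]=\int_{\Tt^d}m_\lambda\,D\psi_n\cdot D^2_{pp}H(x,Du_\lambda)D\psi_n\,dx+\int_{\Tt^d}g'(m_\lambda)\,f_n^2\,dx.
\]
By the strict convexity of $H$ from Assumption A\ref{Hsub}, the positive lower bound $m_\lambda\geq m_0>0$ furnished by Lemma \ref{mbelow}, and the fact that $g'(m_\lambda)$ is bounded below by a positive constant (since $g$ is increasing by Assumption A\ref{gpos} and $m_\lambda$ takes values in a compact subset of $(0,\infty)$), this quantity dominates $c(\|D\psi_n\|_{L^2}^2+\|f_n\|_{L^2}^2)$. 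Since $B_\lambda[w_n,w_n]\leq\|Aw_n\|_F\|w_n\|_F\to 0$, Poincar\'e's inequality on $\dot H^1$ then yields $\psi_n\to 0$ in $\dot H^1$ and $f_n\to 0$ in $L^2$.

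Two further tests dispose of the remaining components of $w_n$. First, testing against $w_2=(0,1,0)$, which has bounded $F$-norm, the bilinear form collapses to
\[
B_\lambda[w_n,(0,1,0)]=\int_{\Tt^d}\bigl(g'(m_\lambda)\,f_n-D_pH\cdot D\psi_n\bigr)dx+h_n|\Tt^d|,
\]
and since the integrals are $o(1)$ by the previous step, we obtain $h_n\to 0$. To recover the remaining piece, $Df_n$, we test against $w_2=(\xi_n,0,0)$ with $\xi_n:=f_n-\bar f_n\in\dot H^1$; by Poincar\'e, $\|\xi_n\|_{\dot H^1}\leq C\|Df_n\|_{L^2}\leq C$. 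Substituting and using $\int f_n\,D_pH\cdot Df_n=-\tfrac12\int\div(D_pH)f_n^2$ (integration by parts) produces
\[
B_\lambda[w_n,(\xi_n,0,0)]=\int_{\Tt^d}m_\lambda\,D\psi_n\cdot D^2_{pp}H\,Df_n\,dx-\tfrac12\int_{\Tt^d}\div(D_pH)\,f_n^2\,dx-\|Df_n\|_{L^2}^2.
\]
By Theorem \ref{teom} combined with Sobolev embedding, $D_pH$, $D^2_{pp}H$, and $\div(D_pH)$ are bounded in $L^\infty$, so Cauchy--Schwarz together with the first step renders the first two integrals $o(\|Df_n\|_{L^2})+o(1)$. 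Rearranging, using $|B_\lambda[w_n,(\xi_n,0,0)]|\leq\|Aw_n\|_F\|\xi_n\|_F\to 0$, and absorbing via $\|Df_n\|_{L^2}\leq 1$, we deduce $\|Df_n\|_{L^2}\to 0$, so $f_n\to 0$ in $H^1$. Combining all three steps, $\|w_n\|_F\to 0$, contradicting $\|w_n\|_F=1$.

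The main obstacle is the final step: the diagonal form $B_\lambda[w,w]$ controls only $\|D\psi\|_{L^2}$ and $\|f\|_{L^2}$, and does not directly see $Df$. The mechanism that unlocks it is the antisymmetric pairing $-\int Df_1\cdot D\psi_2$ inside $B_\lambda$: placing the mean-zero part of $f_n$ into the $\psi$-slot of $w_2$ produces $-\|Df_n\|_{L^2}^2$ on the right, with the remaining two integrals controlled by the smallness of $f_n$ and $D\psi_n$ already established at the diagonal step.
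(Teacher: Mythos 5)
Your proposal is correct and takes essentially the same approach as the paper: contradiction, diagonal test for $D\psi_n$ and $f_n$, test against the mean-zero part of $f_n$ for $Df_n$, and test against $(0,1,0)$ for $h_n$. The only differences are cosmetic — you handle $h_n$ before $Df_n$ (the paper does the reverse), and you integrate by parts on the $f_n D_pH\cdot Df_n$ term where the paper simply applies Young's inequality — but the mechanism extracting each norm component is identical.
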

	If the previous claim were false there would exist a sequence $w_n\in F$ with $\|w_n\|_F=1$ such that $Aw_n\to 0.$ Let $w_n=(\psi_n,f_n, h_n)$ 
	\[
	\int_{\mathbb{T}^d} m_{\lambda}D\psi_n\cdot D^2_{pp}H_{\lambda} \cdot D\psi_n+g'(m_{\lambda})f_n^2=B_{\lambda}[w_n,w_n]\to 0.
	\]
	By Lemma \ref{mbelow} and strict convexity of $H$, $D^2_{pp}H_{\lambda}\geq\theta I>0,\text{ and } g'(m_{\lambda})>\delta>0$ where $\delta, \theta$ do not depend on the solution $v_{\lambda}$. Thus, the above expression implies that $\psi_n\to 0$ in $\dot{H}^1_0$ and $f_n\to 0$ in $L^2$. Taking $\check{w}_n=(f_n-\int f_n,0,0)\in F$ we get
	\begin{equation*}
		\int_{\mathbb{T}^d}[-|Df_n|^2+D\psi_n\cdot D^2_{pp}H \cdot Df_n+f_nD_pH_{\lambda}Df_n]=B[w_n,\check{w}_n]
		=(Aw_n,\check{w}_n),
	\end{equation*}
	\[
	\frac{1}{2}\|Df_n\|^2_{L^2(\mathbb{T}^d)}
	-C\left(\|D\psi_n\|^2_{L^2(\mathbb{T}^d)}+\|f_n\|^2_{L^2(\mathbb{T}^d)}\right)\leq-(Aw_n,\check{w}_n)\to 0,
	\]
	where $C$ depends only on $u_{\lambda}$ and $H_{\lambda}$, thus since $D\psi_n,f_n\to 0$ in $L^2$ we get that $f_n \to 0$ in $H^1(\mathbb{T}^d).$
	Now taking $\breve{w}=(0,1,0)$ we get
	\[
	\int_{\mathbb{T}^d}[-D_pH_{\lambda}D\psi_n+g'(m_{\lambda})f_n]+h_n=B[w_n,\breve{w}]
	=(Aw_n,\breve{w})\to 0,
	\]
	using the fact that $D\psi_n,f_n\to 0$ in $L^2$ we get $h_n\to 0$. We conclude that $\|w_n\|_F\to 0$, which contradicts with $\|w_n\|_F=1$.
	\begin{clm}
		$R(A)$ is closed in $F$.
	\end{clm}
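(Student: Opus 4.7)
The plan is to deduce Claim 4 directly from the lower bound in Claim 3 together with boundedness of $A$ from Claim 2, using only the standard fact that $F$ is a Hilbert space (hence complete).

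First I would pick an arbitrary sequence $\{y_n\}\subset R(A)$ with $y_n\to y$ in $F$, and write $y_n=Aw_n$ for some $w_n\in F$. The goal is to show $y\in R(A)$. Since $\{y_n\}$ is convergent, it is Cauchy, so $\|Aw_n-Aw_m\|_F=\|A(w_n-w_m)\|_F\to 0$ as $n,m\to\infty$.

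Next I would invoke Claim 3, which gives
\[
c\|w_n-w_m\|_F\leq \|A(w_n-w_m)\|_F,
\]
so $\{w_n\}$ is Cauchy in $F$. By completeness of $F$, there exists $w\in F$ with $w_n\to w$. Then by Claim 2 the operator $A$ is bounded, hence continuous, so $Aw_n\to Aw$. By uniqueness of limits $y=Aw\in R(A)$, which shows $R(A)$ is closed.

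There is no real obstacle here: the step combining coercivity with completeness is essentially automatic. The only thing to double check is that the constant $c$ from Claim 3 is independent of the sequence (it is, as stated), and that the boundedness from Claim 2 is indeed sufficient for sequential continuity, which is standard for linear bounded operators between Hilbert spaces.
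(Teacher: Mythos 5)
Your proof is correct and takes essentially the same approach as the paper: use the lower bound from Claim 3 to show the preimage sequence is Cauchy, invoke completeness of $F$ to obtain a limit, and then pass to the limit using boundedness of $A$ from Claim 2. The paper's version is just a terser rendering of the identical argument.
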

	If $Au_n\to w$ in $F$ then by Claim 3 $c\|u_n-u_m\|_F\leq \|Au_n-Au_m\|_F\to 0$ as $n,m\to \infty$. Therefore
	$u_n$ converges to some $u\in F$, then $Au=w$ this proves that $R(A)$ is closed.
	\begin{clm}
		$R(A)=F$.\end{clm}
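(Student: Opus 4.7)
The plan is to combine Claim 4 with a direct orthogonality argument. Since $R(A)$ is closed in the Hilbert space $F$, one has the orthogonal decomposition $F = R(A) \oplus R(A)^{\perp}$, so it suffices to show $R(A)^{\perp} = \{0\}$. I would fix $w = (\psi, f, h) \in R(A)^{\perp}$; by Claim 2 this is equivalent to $B_\lambda[u, w] = 0$ for every $u \in F$. The key structural observation driving the rest of the argument is that $B_\lambda$ decomposes into a coercive symmetric part and a skew-symmetric part: the three contributions $f_1 D_p H_\lambda \cdot D\psi_2 - f_2 D_p H_\lambda \cdot D\psi_1$, $D\psi_1 \cdot Df_2 - Df_1 \cdot D\psi_2$, and $h_1 f_2 - h_2 f_1$ all reverse sign under the swap $w_1 \leftrightarrow w_2$, and therefore vanish on the diagonal $w_1 = w_2$.

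Testing with $u = w$ would then yield
\[
0 \,=\, B_\lambda[w, w] \,=\, \int_{\Tt^d} \bigl[\, m_\lambda \, D\psi \cdot D^2_{pp} H_\lambda(x, Du_\lambda) \, D\psi \,+\, g'(m_\lambda) \, f^2 \,\bigr] \, dx.
\]
Lemma \ref{mbelow} supplies $m_\lambda \geq m_0 > 0$, Assumption A\ref{Hsub} supplies $D^2_{pp} H_\lambda \geq \theta I$ with $\theta > 0$, and the monotonicity of $g$ combined with the uniform lower bound on $m_\lambda$ gives $g'(m_\lambda) \geq \delta > 0$. The integrand is then pointwise nonnegative and bounded below by a multiple of $|D\psi|^2 + f^2$, forcing $D\psi \equiv 0$ and $f \equiv 0$. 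Because $\psi$ lies in the zero-mean space $\dot{H}^1(\Tt^d, \Rr)$, this gives $\psi \equiv 0$, and so $w = (0, 0, h)$. To eliminate the remaining scalar component, I would test against $u = (0, 1, 0) \in F$, which is admissible since the second slot of $F$ permits nonzero mean; every contribution to $B_\lambda[u, w]$ then vanishes except the antisymmetric scalar pairing $\int_{\Tt^d}(h_1 f_2 - h_2 f_1) \, dx = -h \, |\Tt^d|$, and the condition $B_\lambda[u, w] = 0$ forces $h = 0$.

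The only bookkeeping point is that the integral formula for $B_\lambda$ was originally written for smooth pairs, so I would first extend it to $F \times F$ by density, relying on the boundedness of $B_\lambda$ supplied by Claim 1; this also justifies using $w$ itself as a test function. Beyond this routine extension, no serious obstacle is anticipated: the entire argument rests on the strict positivity of $m_\lambda$, $D^2_{pp} H_\lambda$, and $g'(m_\lambda)$ already established in the preceding lemmas, together with the zero-mean constraint built into the first slot of $F$.
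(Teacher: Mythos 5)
Your proposal is correct and follows essentially the same approach as the paper's own proof: both take $w\perp R(A)$ (you via the orthogonal decomposition, the paper via contradiction, which is the same thing), both use that $B_{\lambda}[w,w]$ reduces to the coercive symmetric part because the cross terms are skew, and both finish by pairing against $(0,1,0)$ to kill $h$. Your write-up is slightly more careful about the density extension of $B_{\lambda}$ and the sign of the scalar pairing, but there is no substantive difference in method.
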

	Suppose $R(A)\neq F$, then since $R(A)$ is closed in $F$ there exists $w\neq 0$ such that $w\bot R(A)$ in $F$. Let $w=(\psi,f,h)$ then
	\[
	0=(Aw,w)=B_{\lambda}[w,w]\geq
	\int_{\mathbb{T}^d}\theta |D\psi|^2+ \delta |f|^2
	\]
	thus $\psi=0,f=0$.
	Choosing now $\bar{w}=(0,1,0)$  gives
	$h=B_{\lambda}[\bar{w},w]=(A\bar w, w)=0$.
	Thus $w=0$, and this implies $R(A)=F$.

	\begin{clm}
		For any $w_0\in F^0$ there exists a unique $w\in F$ such that $B_{\lambda}[w,\tilde{w}]=
		(w_0,\tilde{w})_{F^0}$ for all $\tilde{w}\in F.$ This implies that $w$ is the unique weak solution to the equation $\mathcal{L}_{\lambda}(w)=w_0$.
		Then, regularity theory implies that $w\in F^2$ and
		$\mathcal{L}_{\lambda}(w)=w_0$ in the sense of $F^2.$
	\end{clm}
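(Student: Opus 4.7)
The plan is to combine Claims 3--5 with the open mapping theorem to invert $A$ on $F$, then to recover the weak solution $w$ via the Riesz representation theorem for the linear functional $(w_0,\,\cdot\,)_{F^0}$ on $F$, and finally to promote this weak solution to a strong solution of $\mathcal{L}_\lambda(w)=w_0$ by elliptic regularity for the coupled linear system.

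First, I would note that Claim 3 gives the bound $\|Aw\|_F\geq c\|w\|_F$, which implies that $A$ is injective and, together with Claim 4, has closed range. Combined with Claim 5, this shows that $A\colon F\to F$ is a bounded bijection, and the bounded inverse theorem provides a bounded $A^{-1}\colon F\to F$. Since the embedding $F=F^1\hookrightarrow F^0$ is continuous, the linear functional $\tilde w\mapsto (w_0,\tilde w)_{F^0}$ is bounded on $F$, so by the Riesz representation theorem in the Hilbert space $F$ there is a unique $\xi\in F$ with $(w_0,\tilde w)_{F^0}=(\xi,\tilde w)_F$ for every $\tilde w\in F$. Setting $w:=A^{-1}\xi$ yields
\[
B_\lambda[w,\tilde w]=(Aw,\tilde w)_F=(\xi,\tilde w)_F=(w_0,\tilde w)_{F^0},\qquad \tilde w\in F,
\]
and uniqueness of such $w$ follows from the injectivity of $A$. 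The identity $B_\lambda[w,\tilde w]=(w_0,\tilde w)_{F^0}$ is precisely the distributional formulation of $\mathcal{L}_\lambda(w)=w_0$, because $B_\lambda$ was obtained from the pointwise action of $\mathcal{L}_\lambda$ by integration by parts against test functions $\tilde w$.

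For the regularity step, I would view $\mathcal{L}_\lambda(w)=w_0$ as a linear elliptic system in $(\psi,f)$ whose coefficients $D_pH_\lambda(x,Du_\lambda)$, $D^2_{pp}H_\lambda(x,Du_\lambda)$, $m_\lambda$, and $g'(m_\lambda)$ are smooth thanks to Theorem \ref{teom} and to the strict positivity $m_\lambda\geq m_0>0$ from Lemma \ref{mbelow}. The first equation is in divergence form for $f$, with principal term $\Delta f$; the second is in non-divergence form and uniformly elliptic via $-\Delta\psi$. A standard bootstrap, alternately applying $L^2$ elliptic regularity to each equation with the cross terms in $(\psi,f)$ absorbed into the right-hand side as lower-order sources, yields $w\in F^2$, and then the equation $\mathcal{L}_\lambda(w)=w_0$ holds in the sense of $F^2$.

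The main technical point I expect is the coupling in the regularity bootstrap: the first equation contains $\div(D_pH_\lambda f)$ and $\div(m_\lambda D^2_{pp}H\,D\psi)$, linking $f$ to $D\psi$, while the second contains $g'(m_\lambda)f$ as a zeroth-order source depending on $f$. The regularity gain on each variable has to be iterated carefully so that the coupling terms remain strictly of lower order relative to the principal parts $\Delta f$ and $\Delta\psi$. The uniform positivity $m_\lambda\geq m_0$ from Lemma \ref{mbelow} and the strict convexity $D^2_{pp}H_\lambda\geq\theta I>0$ are essential to preserve the uniform ellipticity of the system throughout this process.
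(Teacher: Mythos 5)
Your proposal follows essentially the same approach as the paper: invoke Claims 3--5 to conclude $A$ is a bounded bijection, apply the Riesz representation theorem to express $(w_0,\cdot)_{F^0}$ as an $F$-inner product, set $w=A^{-1}\xi$, and then promote the weak solution to $F^2$ via elliptic regularity and bootstrap. You are somewhat more explicit than the paper in justifying that the functional is bounded on $F$ (via the embedding $F^1\hookrightarrow F^0$), in deducing uniqueness from injectivity of $A$, and in sketching how the coupled bootstrap proceeds; these are useful clarifications, but the argument is the same.
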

	
	Consider the functional $\tilde{w}\mapsto(w_0,\tilde{w})_{F^0}$ on $F$. By the Riesz representation theorem, there exists $\omega\in F$ such that $(w_0,\tilde{w})_{F^0}=(\omega,\tilde{w})_F$ now taking $w=A^{-1}\omega$ we get $$B[w,\tilde{w}]=(Aw,\tilde{w})_F=(\omega,\tilde{w})_F=(w_0,\tilde{w})_{F^0}.$$
	%
	%
	Therefore $f$ is a weak solution to
	\[
	\Delta f-\div(m_{\lambda}D^2_{pp}H \cdot D\psi+fD_pH_{\lambda})=\psi_0
	\]
	and $\psi$ is a weak solution to
	
	\[
	-\Delta \psi-D_pH_{\lambda}D\psi+g'(m_{\lambda})f+h=f_0.
	\]
	Using results from the regularity theory for elliptic equations and bootstrap arguments, we conclude that $w=(\psi,f,h)\in F^2$, and thus $\mathcal{L}_{\lambda}(w)=w_0.$

	Consequently, $\mathcal{L}_{\lambda}$ is a bijective operator from $F^2$ to $F^0$. Then, $\mathcal{L}_{\lambda}$ is  injective as an operator from $F^k$ to $F^{k-2}$ for any $k\geq 2.$ To prove that it is also surjective take any $w_0\in F^{k-2}$; then, there exists $w\in F^2$ such that $\mathcal{L}_{\lambda}(w)=w_0$. Using regularity theory for elliptic equations and bootstrap arguments, we conclude that in fact $w\in F^k$. This proves that $\mathcal{L}_{\lambda}\colon F^k\to F^{k-2}$ is surjective and, therefore, also bijective.
	
	\begin{clm}
		$\mathcal{L}_{\lambda}$ is an isomorphism from $F^k$ to $F^{k-2}$ for any $k\geq 2.$
	\end{clm}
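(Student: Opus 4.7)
The plan is to reduce the claim to a bootstrap argument built on top of Claim 6. Observe first that $\mathcal{L}_\lambda\colon F^k\to F^{k-2}$ is bounded for every $k\geq 2$: each component of $\mathcal{L}_\lambda(w)$ loses at most two derivatives from $w$, and its coefficients involve $m_\lambda$, $g'(m_\lambda)$, $D_pH_\lambda(x,Du_\lambda)$ and $D^2_{pp}H_\lambda(x,Du_\lambda)$, all of which lie in $H^s(\Tt^d)$ for every $s$ by Theorem \ref{teom} and Lemma \ref{mbelow}. Consequently, once bijectivity at level $k$ is established, the open mapping theorem supplies the bounded inverse needed to promote bijectivity to isomorphism.

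Injectivity is immediate from Claim 6: if $w\in F^k$ satisfies $\mathcal{L}_\lambda(w)=0$ in $F^{k-2}$, the inclusions $F^k\hookrightarrow F^2$ and $F^{k-2}\hookrightarrow F^0$ reduce the identity to the setting of Claim 6, where the uniqueness part forces $w=0$. For surjectivity, take $w_0=(\psi_0,f_0,h_0)\in F^{k-2}\hookrightarrow F^0$; Claim 6 yields a weak solution $w=(\psi,f,h)\in F^2$ of $\mathcal{L}_\lambda(w)=w_0$. What remains is to upgrade $w$ from $F^2$ to $F^k$ by reading $\mathcal{L}_\lambda(w)=w_0$ as the coupled elliptic system
\[
-\Delta\psi\,=\,D_pH_\lambda(x,Du_\lambda)\cdot D\psi\,-\,g'(m_\lambda)f\,-\,h\,+\,\psi_0,
\]
\[
\Delta f\,=\,\div\bigl(D_pH_\lambda(x,Du_\lambda)\,f\,+\,m_\lambda\, D^2_{pp}H_\lambda(x,Du_\lambda)\,D\psi\bigr)\,+\,f_0.
\]
If $w\in F^j$ for some $2\leq j<k$, then the right-hand sides lie in $H^{j-1}(\Tt^d)$, and classical $H^{j+1}$ regularity for $-\Delta$ on the torus (combined with the smoothness of the coefficients) promotes $\psi$ and $f$ to $H^{j+1}(\Tt^d)$, so that $w\in F^{j+1}$. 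After $k-2$ iterations we arrive at $w\in F^k$.

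The main obstacle is the coupled character of the system: the equation for $\psi$ involves $f$ and vice versa, so each pass of the bootstrap must treat both unknowns at the same Sobolev level rather than decoupling them outright. Because the principal parts are copies of $\pm\Delta$ with smooth lower-order coefficients, this is a matter of careful accounting and does not cause any genuine obstruction. Collecting the resulting bijectivity with the boundedness recorded above and applying the open mapping theorem completes the proof.
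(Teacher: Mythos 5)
Your proof is correct and follows the same route as the paper: bijectivity is obtained from Claim~6 together with an elliptic bootstrap, boundedness of $\mathcal{L}_\lambda$ follows from the smoothness of the frozen coefficients via Theorem~\ref{teom} and Lemma~\ref{mbelow}, and the open mapping theorem upgrades this to an isomorphism. (The only discrepancy is cosmetic: in the paper's indexing of $w_0=(\psi_0,f_0,h_0)$ the source $\psi_0$ sits on the $f$-equation and $f_0$ on the $\psi$-equation, i.e.\ the opposite of your labelling.)
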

	Since we have $\mathcal{L}\colon F^k \to F^{k-2}$ is bijective we just need to prove that it is also bounded. But that follows directly from the 
	
	\begin{clm}
		We now prove that the set $\Lambda$ is open.
	\end{clm}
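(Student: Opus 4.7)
The plan is to apply the implicit function theorem in Banach spaces at each point of $\Lambda$. Fix $\lambda_0\in\Lambda$ and let $v_{\lambda_0}=(u_{\lambda_0},m_{\lambda_0},\overline{H}_{\lambda_0})\in\bigcap_k F^k_+$ be the associated classical solution. For $k$ sufficiently large, Sobolev's embedding gives $F^k\hookrightarrow C^2(\Tt^d)$, and the smoothness of $H_\lambda$, $g$, together with the smoothness of the map $\lambda\mapsto H_\lambda$, then ensures that $E\colon\Rr\times F^k_+\to F^{k-2}$ is of class $C^1$ in a neighborhood of $(\lambda_0,v_{\lambda_0})$. By the preceding isomorphism claim, the partial Fr\'echet derivative $\mathcal{L}_{\lambda_0}=D_2 E(\lambda_0,v_{\lambda_0})\colon F^k\to F^{k-2}$ is an isomorphism. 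The implicit function theorem therefore produces an $\epsilon>0$ and a $C^1$ curve $\lambda\mapsto v_\lambda\in F^k$, defined on $(\lambda_0-\epsilon,\lambda_0+\epsilon)$, with $v_{\lambda_0}$ at the starting point and $E(\lambda,v_\lambda)\equiv 0$.

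To conclude that each such $v_\lambda$ is a classical solution of \eqref{mainl}, two points must still be addressed. First, positivity of the density: Lemma \ref{mbelow} gives $m_{\lambda_0}\geq m_0>0$ uniformly, and the embedding $F^k\hookrightarrow C^0(\Tt^d)$, combined with continuity of $\lambda\mapsto m_\lambda$ in $F^k$, allows us to shrink $\epsilon$ if necessary so that $m_\lambda>0$ on $\Tt^d$; hence $v_\lambda\in F^k_+$ throughout the interval. Second, full smoothness: starting from $v_\lambda\in F^k_+$ with $k$ fixed but arbitrarily large, a standard elliptic bootstrap applied successively to the Hamilton-Jacobi and Fokker-Planck equations in \eqref{mainl}, exactly as in the proof of Theorem \ref{teom}, upgrades $v_\lambda$ to $\bigcap_{k'}F^{k'}_+$. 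Consequently $v_\lambda$ is a genuine $C^\infty$ solution, so $(\lambda_0-\epsilon,\lambda_0+\epsilon)\cap[0,1]\subset\Lambda$ and $\lambda_0$ is an interior point of $\Lambda$ relative to $[0,1]$.

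Thus $\Lambda$ is open. Combined with $0\in\Lambda$ via the explicit solution $(u_0,m_0,\overline{H}_0)=(0,1,-g(1))$, the closedness of $\Lambda$ already established through the a priori estimates of Theorem \ref{teom} and Arzel\`a-Ascoli, and the connectedness of $[0,1]$, this forces $\Lambda=[0,1]$; in particular $1\in\Lambda$, yielding the desired classical solution of \eqref{mfg}. The main technical subtlety is verifying that $E$ is genuinely $C^1$ on the chosen Banach setting, which is precisely why $k$ must be taken large enough for Sobolev's embedding to render the Nemytskii operators $u\mapsto H_\lambda(\cdot,Du)$ and $m\mapsto g(m)$ continuously differentiable; once this has been arranged, the isomorphism property of $\mathcal{L}_{\lambda_0}$ and the implicit function theorem complete the argument essentially mechanically.
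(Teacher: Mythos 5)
Your argument is correct and follows essentially the same route as the paper: apply the implicit function theorem at each $\lambda_0\in\Lambda$ using the isomorphism property of $\mathcal{L}_{\lambda_0}$ established in the preceding claims, then bootstrap to recover full smoothness. You are somewhat more explicit than the paper about two points the paper leaves implicit (the $C^1$ regularity of the Nemytskii operators defining $E$, and the need to shrink the interval to keep $m_\lambda>0$ so that $v_\lambda$ stays in $F^k_+$), but these are refinements of the same argument rather than a different approach.
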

	We choose $k>d/2$ so that $H^k(\mathbb{T}^d,\Rr)$ is an algebra.
	For a point $\lambda_0\in \Lambda$ we have proven that the partial derivative $\mathcal{L}=D_2E(\lambda_0,v_{\lambda_0})\colon F^k \to F^{k-2}$ is an isometry. By the implicit function theorem
	(see \cite{D1}) there exists a unique solution $v_{\lambda}\in F^k_+$ to $E(\lambda,v_{\lambda})=0$ for some neighborhood $U$ of $\lambda_0$. Since  $H^k(\mathbb{T}^d,\Rr)$ is an algebra using bootstrap argument it is easy to see that $u_{\lambda}, m_{\lambda}$  are smooth, therefore $v_{\lambda}$  is a classical solution to \eqref{mfg}. Thus we conclude that $U\subset\Lambda$, which proves that $\Lambda$ is open. We have proven that $\Lambda$ is both open and closed, hence $\Lambda=[0,1]$.
\end{proof}

\bibliography{mfg}
\bibliographystyle{plain}

\end{document}